\newtheoremstyle{teoremas} 
{11pt}
{11pt}
{\itshape}
{}
{\bfseries}
{}
{.5em}
{}
\theoremstyle{teoremas} 
\newtheorem{theorem}{Theorem}[section] 
\newtheorem{corollary}[theorem]{Corollary} 
\newtheorem{lemma}[theorem]{Lemma} 
\newtheorem{proposition}[theorem]{Proposition} 
\newtheoremstyle{definition} 
{11pt}
{11pt}
{}
{}
{\bfseries}
{}
{.5em}
{}
\theoremstyle{definition} 
\crefname{theorem}{theorem}{theorems} 
\Crefname{theorem}{Theorem}{Theorems} 
\crefname{lemma}{lemma}{lemmas} 
\Crefname{lemma}{Lemma}{Lemmas} 
\crefname{proposition}{proposition}{propositions} 
\Crefname{proposition}{Proposition}{Propositions} 
\DeclareMathOperator{\rk}{rk}
\renewcommand{\L}{\mathcal{L}}
\newcommand{\Ind}{\mathrm{Ind}}
\newcommand{\x}{\mathbf{x}}
\newcommand{\ch}{\mathrm{ch}}
\renewcommand{\dim}{\operatorname{dim}}
\newcommand{\M}{\mathrm{M}}
\def\dual#1{\expandafter\dual@aux#1\@nil}
\def\dual@aux#1/#2\@nil{\begin{tabular}{@{}c@{}}#1\\#2\end{tabular}}
\begin{document}

\begin{center}
{\large \bf Equivariant inverse Kazhdan--Lusztig polynomials of thagomizer matroids}
\end{center}
\begin{center}
 Alice L.L. Gao$^{1}$, Yun Li$^{2}$ and Matthew H.Y. Xie$^{3}$\\[6pt]

 $^{1,2}$School of Mathematics and Statistics,\\
 Northwestern Polytechnical University, Xi'an, Shaanxi 710072, P.R. China

$^{3}$College of Science, \\
 Tianjin University of Technology, Tianjin 300384, P. R. China\\[6pt]

 Email: $^{1}${\tt llgao@nwpu.edu.cn},
 $^{2}${\tt liyun091402@163.com},
 $^{3}${\tt xie@email.tjut.edu.cn}
\end{center}

\noindent\textbf{Abstract.}
In this paper, we focus on the equivariant inverse Kazhdan--Lusztig polynomials of thagomizer matroids, 
a natural family of graphic matroids associated with the complete tripartite graphs $K_{1,1,n}$. 
These polynomials were introduced by Proudfoot as an extension of the Kazhdan--Lusztig theory for matroids.
We derive closed-form expressions for the $\mathfrak{S}_n$-equivariant inverse Kazhdan--Lusztig polynomials of thagomizer matroids 
and present them explicitly in terms of the irreducible representations of $\mathfrak{S}_n$.
As an application, we also provide explicit formulas for the non-equivariant inverse Kazhdan--Lusztig polynomials,
originally defined by Gao and Xie, and give an alternative proof using generating functions. 
Furthermore, we prove that the inverse Kazhdan--Lusztig polynomials 
of thagomizer matroids are log-concave.

\noindent \emph{AMS Classification 2020:} 
05B35, 05E05, 20C30

\noindent \emph{Keywords:} 
Equivariant inverse Kazhdan--Lusztig polynomial, thagomizer matroid, log-concavity, generating function, Frobenius characteristic map.

\noindent \emph{Corresponding Author: Matthew H.Y. Xie}

\section{Introduction}

The study of Kazhdan--Lusztig polynomials for matroids, 
initiated by Elias, Proudfoot, and Wakefield 
\cite{elias2016kazhdan}, has attracted considerable attention in recent years. 
Based on the Kazhdan--Lusztig--Stanley theory, 
Gao and Xie \cite{gao2021inverse} introduced the inverse 
Kazhdan--Lusztig polynomial $Q_{\M}(t)$ for any matroid $\M$. To study the properties of matroid Kazhdan--Lusztig polynomials, Gedeon, Proudfoot, and Young~\cite{gedeon2017equivariant} introduced the notion of equivariant Kazhdan--Lusztig polynomials.
Within this framework, Proudfoot \cite{proudfoot2021equivariant} developed an equivariant Kazhdan--Lusztig--Stanley theory and defined the equivariant inverse Kazhdan--Lusztig polynomial $Q^W_{\M}(t)$ for a $W$-equivariant matroid.

Braden, Huh, Matherne, Proudfoot, and Wang~\cite{braden2020singular} 
proved that the coefficients of $Q^{W}_{\M}(t)$ 
are isomorphism classes of honest representations of $W$. 
Gao, Xie, and Yang~\cite{gao2022equivariant} determined the equivariant inverse Kazhdan--Lusztig polynomials in the case of uniform matroids.
More recently, Karn, Nasr, Proudfoot, and Vecchi~\cite{karn2023equivariant} extended these computations to the setting of paving matroids. Despite these advances, explicit computations of equivariant inverse Kazhdan--Lusztig polynomials remain challenging, 
especially for graphic matroids. 
In this paper, we compute both the equivariant inverse and the (ordinary) inverse Kazhdan--Lusztig polynomials 
for the thagomizer matroids, a family of graphic matroids 
closely related to complete tripartite graphs. 
We further prove that the inverse Kazhdan--Lusztig polynomials 
of thagomizer matroids are log-concave.

Let us first recall some background. Throughout this paper, 
we adopt the notation of Ferroni, Matherne, Stevens, and Vecchi~\cite{ferroni2024hilbert}. 
Given a matroid $\M = (E, \mathscr{B})$, where $E$ is the ground set 
and $\mathscr{B}$ is the set of all bases of $\M$, 
the rank of any subset $A \subseteq E$ is defined by 
$\rk_\M(A) := \max_{B \in \mathscr{B}} |A \cap B|$, 
and the rank of the matroid is given by $\rk(\M) := \rk_\M(E)$. 
A flat of $\M$ is either the entire ground set $E$, or a proper subset 
$F \subsetneq E$ such that adding any element from $E \setminus F$ 
strictly increases its rank. The set of all flats, 
ordered by inclusion, forms a lattice denoted by $\L(\M)$. 
For any flat $F \in \L(\M)$, we denote by $\M|_F$ the restriction 
of $\M$ at $F$, and by $\M/F$ the contraction of $\M$ at $F$. 
Let $\chi_{\M}(t)$ be the characteristic polynomial of $\M$. 
Gao and Xie~\cite{gao2021inverse} showed that for every loopless matroid $\M$ there is a unique polynomial $Q_\M(t)\in\mathbb{Z}[t]$ satisfying
\begin{itemize}
 \item If $\rk(\M)=0$, then $Q_\M(t)=1$.
 \item If $\rk(\M)>0$, then $\deg Q_\M(t)<\tfrac12\rk(\M)$.
 \item For every matroid $\M$,
 $$
 t^{\rk(\M)}(-1)^{\rk(\M)}\,Q_\M\!\bigl(t^{-1}\bigr)
 =
 \sum_{F\in\L(\M)}
 (-1)^{\rk(\M|_F)}\,
 Q_{\M|_F}(t)\,
 t^{\rk(\M/F)}\,
 \chi_{\M/F}\!\bigl(t^{-1}\bigr).
 $$
\end{itemize}
The polynomial $Q_\M(t)$ is called the inverse Kazhdan--Lusztig polynomial of $\M$.

We proceed to recall some definitions and notations related to equivariant inverse Kazhdan--Lusztig polynomials of matroids. 
Let $\M$ be a matroid on a finite ground set $E$, 
and let $W$ be a finite group acting on $E$ and preserving $\M$. 
Gedeon, Proudfoot, and Young~\cite{gedeon2017equivariant} 
referred to this data as an equivariant matroid $W \curvearrowright \M$.
Given an equivariant matroid $W \curvearrowright \M$, Gedeon, Proudfoot, and Young~\cite{gedeon2017equivariant} defined the equivariant characteristic polynomial 
$H^{W}_{\M}(t)\in \mathrm{VRep}(W)[t]$
and 
the equivariant Kazhdan--Lusztig polynomial 
$
P^{W}_{\M}(t) \in \mathrm{VRep}(W)[t]
$.
Here $\mathrm{VRep}(W)$ denotes the ring of virtual representations 
of $W$, and $\mathrm{VRep}(W)[t]$ is the polynomial ring 
in $t$ with coefficients in $\mathrm{VRep}(W)$. 
For each flat $F \in \mathcal{L}(\M)$, let $W_F \subseteq W$ be the stabilizer subgroup of $F$.
Proudfoot~\cite{proudfoot2021equivariant} showed that there exists a unique way to assign to each equivariant matroid $W \curvearrowright \M$ a polynomial $Q^{W}_{\M}(t) \in \mathrm{VRep}(W)[t]$, called the equivariant inverse Kazhdan--Lusztig polynomial, satisfying 
\begin{itemize}
\item If $\rk(\M)=0$, then ${Q^{W}_{\M}}(t)$ is the trivial representation in degree $0$.

\item If $\rk(\M)>0$, then $\deg {Q^{W}_{\M}}(t) < \frac 12 \rk(\M)$.

\item For every $W \curvearrowright \M$, 
\begin{align*}
& t^{\rk(\M)} \cdot (-1)^{\rk(\M)}{Q^{W}_{\M}}(t^{-1}) \\
&\qquad \qquad =\displaystyle\sum_{[F] \in \L(\M)/W}\Ind^{W}_{W_F}\left((-1)^{\rk(\M|_F)} {Q^{W_F}_{\M|_F}}(t) \otimes t^{\rk(\M/F)} {H^{W_F}_{\M/F}}(t^{-1})\right).
\end{align*}
\end{itemize}
Moreover, the graded dimension of $Q^{W}_{\M}(t)$ recovers the usual inverse Kazhdan--Lusztig polynomial $Q_{\M}(t) \in \mathbb{Z}[t]$.

Let $T_n$ denote the matroid associated with 
the complete tripartite graphs $K_{1,1,n}$, which is equivalently obtained 
by adding an edge between the two distinguished vertices 
of the bipartite graph $K_{2,n}$. 
Gedeon~\cite{gedeon2017kazhdan} referred to $T_n$ 
as the thagomizer matroid, and computed its 
Kazhdan--Lusztig polynomial $P_{T_n}(t)$. 
We now focus on the case where $W = \mathfrak{S}_n$ and $\M = T_n$.
The equivariant Kazhdan--Lusztig polynomial 
$P^{\mathfrak{S}_n}_{T_n}(t)$ was determined by Xie and Zhang~\cite{xie2019equivariant}. 
In this paper, we study the equivariant inverse Kazhdan--Lusztig polynomial $Q^{\mathfrak{S}_n}_{T_n}(t)$ of the thagomizer matroid $T_n$. 
Given a partition $\lambda \vdash n$, let $V_{\lambda}$ denote the irreducible representation of the symmetric group $\mathfrak{S}_n$ corresponding to $\lambda$.
Our first main result provides a closed formula for the corresponding 
equivariant inverse Kazhdan--Lusztig polynomial $Q^{\mathfrak{S}_n}_{T_n}(t)$.

\begin{theorem}\label{thm-equi}
For any equivariant thagomizer matroid $\mathfrak S_n \curvearrowright T_n$ with $n \geq 0$, we have
$$ {Q^{\mathfrak{S}_n}_{T_n}}(t) = \sum_{k=0}^{\lfloor \frac{n}{2} \rfloor }\sum_{i=0}^{k} \sum_{j=k-i}^{\lfloor \frac{n-3i}{2} \rfloor} (n-3i-2j+1) V_{(3^i,2^j,1^{n-3i-2j})} t^{k}.$$
\end{theorem}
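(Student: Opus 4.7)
The plan is to verify the formula by induction on $n$ using the defining recursion for $Q^{\mathfrak{S}_n}_{T_n}(t)$ stated in the introduction, combined with the Frobenius characteristic map $\ch$, which converts induction of symmetric-group representations into multiplication of Schur symmetric functions.

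The first step is to classify the $\mathfrak{S}_n$-orbits of flats of $T_n$. Writing the two distinguished vertices of $K_{1,1,n}$ as $u,v$ and the remaining vertices as $1,\dots,n$, every flat has one of two shapes: (Type I) $u$ and $v$ lie in a common block together with a subset $S\subseteq[n]$, yielding a flat of rank $|S|+1$ with stabilizer $\mathfrak{S}_{|S|}\times\mathfrak{S}_{n-|S|}$, restriction $T_n|_F\cong T_{|S|}$, and contraction $T_n/F$ equal to a direct sum of $n-|S|$ copies of $U_{1,2}$; or (Type II) $u$ is joined with $A\subseteq[n]$ and $v$ with a disjoint $B\subseteq[n]$, giving a flat of rank $|A|+|B|$ with stabilizer $\mathfrak{S}_{|A|}\times\mathfrak{S}_{|B|}\times\mathfrak{S}_{n-|A|-|B|}$, restriction equal to a Boolean matroid on $|A|+|B|$ elements — whose inverse Kazhdan--Lusztig polynomial is the trivial representation in degree $0$ — and contraction equal to $T_{n-|A|-|B|}$ with the central edge replaced by $|A|+|B|+1$ parallel copies. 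From these descriptions the equivariant characteristic polynomials $H^{W_F}_{T_n/F}$ can all be read off, using the Xie--Zhang formula for $H^{\mathfrak{S}_m}_{T_m}(t)$ in the Type II case and a direct computation for the very simple matroids appearing in the Type I case.

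Applying $\ch$ to both sides of the defining recursion and isolating the contribution of the top flat $F=T_n$, I obtain
$$(-1)^{n+1}\bigl(t^{n+1}\,\ch(Q^{\mathfrak{S}_n}_{T_n}(t^{-1})) - \ch(Q^{\mathfrak{S}_n}_{T_n}(t))\bigr) = \sum_{[F]\neq[T_n]}(\text{explicit Schur-function term}),$$
where under $\ch$ the induction from $W_F$ to $\mathfrak{S}_n$ becomes multiplication by the appropriate product of Schur functions of the stabilizer blocks. Substituting the inductive hypothesis for $\ch(Q^{\mathfrak{S}_s}_{T_s}(t))$ with $s<n$ into the Type I terms (the Type II terms require only the trivial representation together with the characteristic polynomial data) turns the right-hand side into an explicit sum of products of Schur symmetric functions. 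By uniqueness of $Q^{\mathfrak{S}_n}_{T_n}(t)$, the theorem reduces to verifying that the candidate
$$\sum_{k,i,j}(n-3i-2j+1)\,s_{(3^i,2^j,1^{n-3i-2j})}\,t^k$$
satisfies this closed identity in the ring of symmetric functions with coefficients in $\mathbb{Z}[t]$.

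The main obstacle is precisely this symmetric-function verification: the multiplicity $n-3i-2j+1$ must match exactly the coefficient produced by the alternating sum on the right-hand side. The favorable feature is that every partition appearing has parts confined to $\{1,2,3\}$, so the relevant Pieri-rule expansions are tightly constrained. I would attempt the verification either by a direct Pieri-rule bookkeeping, exploiting the restricted partition shapes, or by packaging the recursion as a functional equation in an auxiliary generating-function variable $x$ — the approach signaled by the paper's abstract — and solving it in closed form. Taking graded dimensions recovers the ordinary inverse Kazhdan--Lusztig polynomial $Q_{T_n}(t)$, which provides a useful consistency check throughout the computation.
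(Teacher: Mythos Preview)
Your plan differs from the paper's in one essential way: you propose to use the \emph{defining} recursion for $Q^W_\M(t)$, which involves the equivariant characteristic polynomials $H^{W_F}_{\M/F}$, whereas the paper uses Proudfoot's alternative identity
\[
\sum_{[F]\in\L(\M)/W}(-1)^{\rk(\M|_F)}\Ind^W_{W_F}\bigl(Q^{W_F}_{\M|_F}(t)\otimes P^{W_F}_{\M/F}(t)\bigr)=0,
\]
which relates $Q$ directly to the equivariant Kazhdan--Lusztig polynomial $P$. This is the crucial simplification: since Xie--Zhang already computed $P^{\mathfrak{S}_n}_{T_n}(t)$ explicitly, the paper avoids characteristic polynomials entirely. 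After applying $\ch$ and passing to generating functions in an auxiliary variable $u$, the relation collapses to the closed identity $Q_n(\x;t)=\sum_{i=0}^n(-1)^i e_{n-i}[3X]\,P_i(\x;t)$ (Lemma~\ref{lem-equi}), with no induction on $n$ needed. Substituting the Xie--Zhang formula for $P_i(\x;t)$ and simplifying via two short Pieri computations (Lemmas~\ref{lemma-pieri-1} and~\ref{lem-pire-2}) then yields the Schur expansion directly.

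Your route is not wrong in principle, but it carries real extra weight. First, you invoke ``the Xie--Zhang formula for $H^{\mathfrak{S}_m}_{T_m}(t)$'', but Xie--Zhang computed $P$, not $H$; you would have to supply the equivariant characteristic polynomials yourself, including for the Type~II contractions with parallel central edges (where the stabilizer acts nontrivially on the parallel class). Second, your final step is an inductive symmetric-function identity that you yourself describe as the ``main obstacle'' and leave unverified; the paper's approach replaces this with a non-inductive plethystic identity that is substantially easier to handle. What your approach would buy, if carried out, is independence from the Xie--Zhang $P$-computation --- but at the cost of redoing an equivalent amount of work on the $H$ side.
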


In the non-equivariant setting, Gao and Xie conjectured that the coefficients of any inverse matroid Kazhdan--Lusztig polynomials are nonnegative. This conjecture was resolved by Braden, Huh, Matherne, 
Proudfoot, and Wang~\cite{braden2020singular}.
Ardila and Sanchez~\cite{ardila2023valuations} 
showed that the inverse Kazhdan--Lusztig polynomial is a 
valuative invariant. Based on this result, 
Ferroni and Schr\"{o}ter~\cite{ferroni2024valuative} derived 
an explicit formula for $Q_{\M}(t)$ in the case where $\M$ is an elementary 
split matroid --- a broad class that includes paving matroids and thus 
uniform matroids.
Our next main result is the following theorem.

\begin{theorem}\label{thm-ord}
For any thagomizer matroid $T_{n}$ with $n \geq 0$, we have
\begin{align}\label{QTN}
 Q_{T_{n}}(t)=
 \sum_{k=0}^{\lfloor\frac{n}{2}\rfloor} \sum_{i=2k}^{n} \frac{n-i+1}{n+1} \dbinom{n+1}{k,i-2k,n+k-i+1} t^{k}.
\end{align}
\end{theorem}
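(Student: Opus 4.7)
My primary plan is to obtain Theorem~\ref{thm-ord} from Theorem~\ref{thm-equi} by applying the graded-dimension functor $\dim\colon\mathrm{VRep}(\mathfrak{S}_n)\to\mathbb{Z}$. Since $\dim V_\lambda = f^\lambda$ counts standard Young tableaux of shape $\lambda$, Theorem~\ref{thm-equi} gives
\[
[t^k]\,Q_{T_n}(t) = \sum_{i=0}^{k}\sum_{j=k-i}^{\lfloor(n-3i)/2\rfloor} (n-3i-2j+1)\,f^{\,(3^i,\,2^j,\,1^{\,n-3i-2j})}.
\]
A column-by-column application of the hook length formula yields the explicit closed form
\[
f^{\,(3^i,\,2^j,\,1^\ell)} = \frac{n!\,(j+\ell+2)(\ell+1)(j+1)}{(i+j+\ell+2)!\,(i+j+1)!\,i!}\qquad(n = 3i + 2j + \ell),
\]
which reduces the theorem to a hypergeometric identity between this double sum and the trinomial sum in \eqref{QTN}. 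I would reindex by $m = 3i + 2j \in \{2k,\ldots,n\}$ (so that $i$ has the parity of $m$) and attempt to collapse the inner sum over $i$ by successive Chu--Vandermonde summations until the outer sum in $m$ matches the right-hand side.

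\textbf{Generating functions.} For a proof independent of Theorem~\ref{thm-equi}, I would apply the defining recursion for $Q_{T_n}(t)$ directly, using the flat structure of $T_n = M(K_{1,1,n})$. The flats split into two families: \emph{Type~A}, parameterized by $S \subseteq [n]$, with restriction $T_{|S|}$ and contraction whose simplification is a rank-$(n-|S|)$ free matroid; and \emph{Type~B}, parameterized by ordered pairs of disjoint subsets $(S_1, S_2) \subseteq [n]$, with Boolean restriction (so $Q = 1$) and simplified contraction $T_{n - |S_1| - |S_2|}$. Using the characteristic polynomial $\chi_{T_m}(t) = (t-1)(t-2)^m$, the Type~B contribution collapses via the binomial theorem to $(-1)^n(1-t)(1+2t)^n$, while Type~A gives a binomial convolution in the lower $Q_{T_s}(t)$. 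Multiplying by $x^n/n!$ and summing converts this into a functional equation for $H(x,t) := \sum_{n \geq 0} Q_{T_n}(t)\,x^n/n!$, which one then verifies against the closed form in \eqref{QTN}.

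\textbf{Main obstacle.} The harder step is the combinatorial identity produced by the first approach. Already at $n = 3$, $k = 1$ the two sides agree globally (both equal $5$) but not term-by-term under the natural grouping by $m = 3i + 2j$: the LHS contributions at $m = 2, 3$ are $4, 1$, while on the RHS they are $2, 3$. A direct Chu--Vandermonde reduction at fixed $m$ therefore cannot succeed, and some cross-$m$ rearrangement or a bijection with lattice-path objects enumerated by the ballot-type trinomial $\tfrac{n-i+1}{n+1}\binom{n+1}{k,\,i-2k,\,n+k-i+1}$ will be needed. The cycle-lemma form of this trinomial suggests a Motzkin-path interpretation that should also simplify the verification of the functional equation in the generating-function route.
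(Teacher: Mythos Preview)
Your dimension-specialization route starts from the wrong equivariant formula. The paper does not take dimensions in Theorem~\ref{thm-equi}; it takes them in the intermediate Proposition~\ref{equi-pro},
\[
Q^{\mathfrak{S}_n}_{T_n}(t)=\sum_{k=0}^{\lfloor n/2\rfloor}\Bigl(\sum_{i=0}^{n-2k}\Ind^{\mathfrak{S}_n}_{\mathfrak{S}_i\times\mathfrak{S}_{n-i}} V_{(1^i)}\otimes V_{(2^k,1^{n-2k-i})}\Bigr)t^k,
\]
which sits \emph{before} the Pieri decomposition into irreducibles. Here the dimension computation is a one-liner: the index of the Young subgroup is $\binom{n}{i}$, $\dim V_{(1^i)}=1$, and the two-column hook-length formula gives $\dim V_{(2^k,1^{m})}=\tfrac{(m+1)(2k+m)!}{k!(k+m+1)!}$, which after the reindexing $i\mapsto i-2k$ is exactly the trinomial in \eqref{QTN}. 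Your ``main obstacle'' --- the cross-$m$ hypergeometric identity --- is thus an artifact of working with the irreducible decomposition rather than the induced form. The paper in fact records the formula one gets by specializing Theorem~\ref{thm-equi} directly as a separate statement, distinct from Theorem~\ref{thm-ord}, and never attempts to equate the two sums combinatorially.

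Your generating-function route is closer in spirit to the paper's alternative proof in Section~\ref{sec-inv}, but differs in two ways that matter. First, the paper uses the $P$--$Q$ inversion relation $\sum_F(-1)^{\rk(\M|_F)}P_{\M|_F}(t)\,Q_{\M/F}(t)=0$ rather than the defining recursion with characteristic polynomials; this avoids the $t\leftrightarrow t^{-1}$ mixing inherent in the defining recursion and replaces your characteristic-polynomial contribution with a convolution against the known $P_{T_i}(t)$. Second, it works with ordinary rather than exponential generating functions, so it can substitute Gedeon's closed form for $\Phi(t,u)=\sum_n P_{T_n}(t)u^{n+1}$ and solve algebraically for $\Psi(t,u)=\sum_n Q_{T_n}(t)u^n$. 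The match with \eqref{QTN} is then completed by extracting a four-term recurrence in $n$ from $\Psi$ and checking via holonomic computer algebra that the trinomial sum satisfies it --- no lattice-path bijection is used.
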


The third part of this paper investigates the log-concavity 
of the inverse Kazhdan--Lusztig polynomials of thagomizer matroids. 
Recall that a polynomial $f(t)=\sum_{i=0}^n a_i t^i$ is log-concave
if its coefficients satisfy $a_i^2 \geq a_{i-1}a_{i+1}$ for all $1 \leq i \leq n-1$,
and it is said to have no internal zeros if there are no three indices
$0 \leq i < j < k \leq n$ such that $a_i, a_k \neq 0$ and $a_j = 0$.
Motivated by the log-concavity conjecture for matroid Kazhdan--Lusztig polynomials 
proposed by Elias, Proudfoot, and Wakefield~\cite{elias2016kazhdan}, 
Gao and Xie~\cite{gao2021inverse} 
conjectured that the coefficients of the inverse Kazhdan--Lusztig polynomial 
$Q_\M(t)$ are also log-concave and contain no internal zeros for any matroid $\M$.
Partial progress toward this conjecture has already been achieved. 
Gao and Xie~\cite{gao2021inverse} verified the log-concavity of \(Q_\M(t)\) for uniform matroids, 
whereas Xie and Zhang~\cite{xie2025log} established it for paving matroids.
In this paper, we further these developments by proving the following result.

\begin{theorem}\label{thm-log}
For any positive integer $n$, the inverse Kazhdan--Lusztig polynomial ${Q_{T_n}(t)}$ of the thagomizer matroid $T_n$ is log-concave.
\end{theorem}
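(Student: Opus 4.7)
The plan is to prove the log-concavity of $Q_{T_n}(t)$ by reducing it to a log-concavity statement for a simpler auxiliary sequence whose generating function is rational, and then analysing that sequence directly. Write $q_{n,k} := [t^k] Q_{T_n}(t)$ for the coefficients appearing in \eqref{QTN}.

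First I would simplify Theorem~\ref{thm-ord}: factoring $\binom{n+1}{k,i-2k,n+k-i+1}=\binom{n+1}{k}\binom{n-k+1}{i-2k}$ and substituting $j=i-2k$ yields
$$q_{n,k} = \frac{\binom{n+1}{k}}{n+1}\,R_k, \quad \text{where} \quad R_k := \sum_{j=0}^{n-2k}(n-2k-j+1)\binom{n-k+1}{j} = [x^{n-2k}]\frac{(1+x)^{n-k+1}}{(1-x)^{2}}.$$
Since $\binom{n+1}{k}^{2}/\bigl(\binom{n+1}{k-1}\binom{n+1}{k+1}\bigr) = (k+1)(n-k+2)/\bigl(k(n-k+1)\bigr)>1$, log-concavity of $q_{n,k}$ in $k$ would follow from that of $R_k$. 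Applying the change of variable $z=x/(1+x)$ to the coefficient extraction then gives the cleaner form
$$R_k = [z^{n-2k}]\frac{1}{(1-z)^{k}(1-2z)^{2}},$$
and partial-fraction decomposition at the poles $z=1$ and $z=1/2$ yields the closed form
$$R_k = (n-3k+1)\,2^{n-k} + \sum_{j=0}^{k-1}(k-j)\,2^{k-j-1}\binom{n-2k+j}{j}.$$

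With this expression in hand, the main step is to verify the Tur\'an-type inequality $R_k^{2} \geq R_{k-1}R_{k+1}$ by direct algebraic manipulation. I would split into the regime $k \leq (n+1)/3$, where both summands of $R_k$ are non-negative and a monotonicity argument on the ratio $R_{k+1}/R_k$ carries through cleanly, and the regime $k > (n+1)/3$, where the leading term becomes negative and the positive binomial sum must dominate. The main obstacle is this latter regime, where sign cancellations require delicate estimates on the partial sums of binomial coefficients appearing in the second term. If this route becomes too technical, a natural alternative is to use the recursion $(n+1)p_{n,k} = (n-1)p_{n-2,k-1} + n p_{n-1,k}$ for $p_{n,k} := q_{n,k}/\binom{n+1}{k}$, which follows by combining the coefficient extractions for $R_k$ at $(n,k)$, $(n-1,k)$, and $(n-2,k-1)$, and to argue by induction on $n$. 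This inductive approach would reduce the problem to a cross-Tur\'an inequality $2 p_{n-2,k-1} p_{n-1,k} \geq p_{n-2,k-2} p_{n-1,k+1} + p_{n-2,k} p_{n-1,k-1}$, which would have to be maintained alongside log-concavity in a strengthened induction hypothesis.
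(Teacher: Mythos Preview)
Your initial reduction is exactly the paper's: it writes $c_{n,k}=b_{n,k}d_{n,k}$ with $b_{n,k}=\tfrac{1}{n+1}\binom{n+1}{k}$ and $d_{n,k}=\sum_{j=0}^{n-2k}(n-2k-j+1)\binom{n-k+1}{j}$, so your $R_k$ is precisely their $d_{n,k}$, and the log-concavity of the binomial factor is disposed of in the same one line. Your generating-function identity $R_k=[z^{n-2k}](1-z)^{-k}(1-2z)^{-2}$, the resulting closed form, and the pleasant recursion $d_{n,k}=d_{n-1,k}+d_{n-2,k-1}$ all check out and are not in the paper.

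Where you diverge from the paper is in the crucial step, the log-concavity of $\{d_{n,k}\}_k$, and here your plan is incomplete. The direct estimate from the closed form has the obstacle you already flag: once $n<3k-1$ the leading term $(n-3k+1)2^{n-k}$ is negative and comparable in size to the binomial sum, so there is no obvious monotone ratio to exploit, and ``delicate estimates'' here are genuinely delicate. The inductive route via $d_{n,k}=d_{n-1,k}+d_{n-2,k-1}$ is more promising, but the cross-Tur\'an inequality you would need to carry along is not obviously self-reinforcing; squaring $d_{n,k}=d_{n-1,k}+d_{n-2,k-1}$ and subtracting $d_{n,k-1}d_{n,k+1}$ produces mixed terms at \emph{two} different $n$-levels, and one typically cannot close such an induction with a single auxiliary inequality. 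Neither route, as stated, has a clear finishing mechanism.

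The paper instead follows the Kauers--Paule paradigm. Using holonomic machinery it finds short mixed recurrences (your recursion is in the ideal they compute, incidentally), expresses $d_{n-1,k}^{2}-d_{n-1,k-1}d_{n-1,k+1}$ as $d_{n-1,k}^{2}$ times an explicit quadratic $f_{n,k}(x)$ evaluated at $x=d_{n,k}/d_{n-1,k}$, and then proves by induction on $n$ that this ratio exceeds the larger root $X(n,k)$ of $f_{n,k}$. The induction step reduces to a closed-form inequality in $n$ and $k$ that \texttt{Resolve} certifies. This sidesteps both your sign-cancellation issue and the need for a strengthened combinatorial hypothesis, at the cost of relying on computer algebra for the final verification. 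If you want to salvage your approach, the recursion $d_{n,k}=d_{n-1,k}+d_{n-2,k-1}$ is a good starting point for a Kauers--Paule style ratio bound; the required quadratic would be simpler than the paper's since your recurrence has fewer terms.
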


This paper is organized as follows.
In Section~\ref{section:2}, we will review some basic definitions
and notation for symmetric functions and give some results which will be used later.
In Section~\ref{sec-equ-inv}, 
we compute the equivariant inverse Kazhdan--Lusztig polynomials of thagomizer matroids using
the theory of symmetric functions, from which we also obtain the non-equivariant inverse Kazhdan--Lusztig polynomials.
In Section~\ref{sec-inv}, as an alternative approach,
we determine the inverse Kazhdan--Lusztig polynomials of thagomizer matroids 
by relating them to the Kazhdan--Lusztig polynomials and applying the technique 
of ordinary generating functions. 
In Section~\ref{sec-log}, 
we first derive several recurrence relations satisfied by $d_{n,k}$ --- where $d_{n,k}$, defined in~\eqref{eq:d_n,k}, denotes a component of the coefficient of $t^k$ in the inverse Kazhdan--Lusztig polynomial of thagomizer matroids. We then establish a lower bound for $\frac{d_{n,k}}{d_{n-1,k}}$, 
and ultimately verify the log-concavity of the inverse Kazhdan--Lusztig polynomials of thagomizer matroids.

\section{Symmetric functions}\label{section:2}

In this section, we review the basic definitions and fundamental 
results concerning symmetric functions. For terminology not 
explicitly defined here, we refer the reader to 
Stanley~\cite{Stanley1999} and Haglund~\cite{haglund2008q}. 
We also present several symmetric function identities that will 
be instrumental in evaluating the equivariant inverse 
Kazhdan--Lusztig polynomials of thagomizer matroids.

\subsection{Frobenius characteristic map}

The Frobenius characteristic map is an important tool in the 
theory of symmetric functions. 
Let $\Lambda_n$ denote the $\mathbb{Z}$-module of homogeneous symmetric functions of degree $n$ in variables $\x=(x_1,x_2,\ldots)$. 
The Frobenius characteristic map is an isomorphism
$$\ch:\mathrm{VRep}(\mathfrak{S}_n) \to \Lambda_n.$$ 
This map sends each irreducible representation $V_\lambda$ of the symmetric group $\mathfrak{S}_n$ to the Schur function ${s_\lambda}(\x)$ indexed by the partition $\lambda \vdash n$. 
In particular, the trivial representation $V_{(n)}$ is mapped to 
$s_{(n)}(\x) = h_n(\x)$, the complete symmetric function, while 
the sign representation $V_{(1^n)}$ is mapped to $s_{(1^n)}(\x) = 
e_n(\x)$, the elementary symmetric function.
The Frobenius characteristic map $\ch$ extends 
linearly from $\mathrm{VRep}(\mathfrak{S}_n)$ to the graded representation ring 
$\mathrm{VRep}(\mathfrak{S}_n)[t]$, preserving the grading in $t$.
Moreover, for any two graded virtual representations $V_1 \in \mathrm{VRep}(\mathfrak{S}_{n_1})[t]$ and $V_2 \in \mathrm{VRep}(\mathfrak{S}_{n_2})[t]$, we have
$$\ch\ \mathrm{Ind}_{\mathfrak{S}_{n_1} \times \mathfrak{S}_{n_2}}^{\mathfrak{S}_{n_1+n_2}}(V_1 \otimes V_2) = \ch(V_1) \ch(V_2).$$
This identity extends naturally to tensor products of three 
or more factors.

\subsection{Plethystic substitution}

We now recall the definition of plethystic substitution, 
following the notation of Haglund~\cite{haglund2008q}. Let $E = (t_1, t_2, t_3, \ldots)$ be a formal series of rational functions in the parameters. The plethystic substitution of $E$ into the $k$-th power sum $p_k$ is defined as
$$
p_k[E] = E(t_1^k, t_2^k, \ldots).
$$
For any symmetric function $f$ expressed as $f = \sum_{\lambda} c_{\lambda} p_{\lambda}$, its plethystic substitution is given by
$$
f[E] = \sum_{\lambda} c_{\lambda} \prod_{i} p_{\lambda_i}[E].
$$
In particular, when $X = \sum_i x_i$, we have
$p_k[X] = p_k(\x)$, which implies $f[X] = f(\x)$ for any symmetric function $f$.
It is  straightforward to see that
$p_k[-X] = -p_k(\x) = -p_k[X]$.
\begin{lemma}[\textnormal{\cite[Theorem~1.27]{haglund2008q}}]\label{lem:hung-1}
Let $E= (t_1, t_2, t_3, \ldots)$ and $F= (w_1, w_2, w_3, \ldots)$
be two
formal series of rational functions in the parameters $t_1, t_2, t_3, \ldots$ and $w_1, w_2, w_3, \ldots$. Then 
\begin{align*}
e_{n}[E+F]&=\sum_{j=0}^{n}e_{j}[E]e_{n-j}[F];\\
e_{n}[E-F]&=\sum_{j=0}^{n}(-1)^{n-j}e_{j}[E]h_{n-j}[F].
\end{align*}
\end{lemma}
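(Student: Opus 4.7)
The plan is to work with the classical generating functions for the elementary and complete symmetric functions in terms of power sums, and then use the fact that plethysm acts on these by the substitution $p_k \mapsto p_k[E]$. Recall the identities
\[
\sum_{n\ge 0} e_n(\x)\, z^n \;=\; \exp\!\Bigl(\sum_{k\ge 1}\tfrac{(-1)^{k-1}}{k}\,p_k(\x)\,z^k\Bigr),
\qquad
\sum_{n\ge 0} h_n(\x)\, z^n \;=\; \exp\!\Bigl(\sum_{k\ge 1}\tfrac{1}{k}\,p_k(\x)\,z^k\Bigr).
\]
By the definition of plethystic substitution as the ring homomorphism determined by $p_k[E]=E(t_1^k,t_2^k,\ldots)$ and its multiplicativity on sums of alphabets, namely $p_k[E\pm F]=p_k[E]\pm p_k[F]$, the key observation is that $f\mapsto f[E]$ commutes with power series manipulations in the $p_k$'s.

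For the first identity, I would apply plethysm with $E+F$ to the generating series for $e_n$:
\[
\sum_{n\ge 0} e_n[E+F]\,z^n
=\exp\!\Bigl(\sum_{k\ge 1}\tfrac{(-1)^{k-1}}{k}(p_k[E]+p_k[F])\,z^k\Bigr)
=\Bigl(\sum_{n\ge 0} e_n[E]\,z^n\Bigr)\Bigl(\sum_{n\ge 0} e_n[F]\,z^n\Bigr),
\]
where I split the exponential as a product because the two sums in the exponent commute. Extracting the coefficient of $z^n$ yields the claimed convolution $e_n[E+F]=\sum_{j=0}^n e_j[E]\,e_{n-j}[F]$.

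For the second identity, I would repeat the computation with $E-F$ in place of $E+F$. The only change is that the $F$-contribution to the exponent enters with a minus sign, producing
\[
\exp\!\Bigl(-\sum_{k\ge 1}\tfrac{(-1)^{k-1}}{k}\,p_k[F]\,z^k\Bigr)
=\exp\!\Bigl(\sum_{k\ge 1}\tfrac{1}{k}\,p_k[F]\,(-z)^k\Bigr)
=\sum_{n\ge 0} h_n[F]\,(-z)^n,
\]
which is exactly the generating series for $h_n$ with $z$ replaced by $-z$. Multiplying by $\sum_n e_n[E]\,z^n$ and extracting the coefficient of $z^n$ gives $e_n[E-F]=\sum_{j=0}^n(-1)^{n-j} e_j[E]\,h_{n-j}[F]$.

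The main obstacle, or really the only subtle point, is to justify that plethystic substitution distributes over the formal sum and difference of alphabets in the way used above; once that foundational fact (which is part of the definition in Haglund's setup) is in hand, both identities reduce to a single line of generating function manipulation. Since the statement is quoted directly from \cite[Theorem 1.27]{haglund2008q}, I would either cite this multiplicativity from the same source or verify it for power sums and extend to all symmetric functions by linearity and multiplicativity of $f\mapsto f[E]$.
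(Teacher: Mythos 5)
The paper does not prove this lemma at all: it is quoted verbatim from Haglund's book (Theorem 1.27) and used as a black box, so there is no in-paper argument to compare against. Your proof is a correct, self-contained derivation of the cited result. The generating-function identities $\sum_n e_n z^n=\exp\bigl(\sum_k \tfrac{(-1)^{k-1}}{k}p_k z^k\bigr)$ and $\sum_n h_n z^n=\exp\bigl(\sum_k \tfrac{1}{k}p_k z^k\bigr)$ are standard, the map $f\mapsto f[E]$ is by definition the ring homomorphism sending $p_k\mapsto p_k[E]$, and the additivity $p_k[E\pm F]=p_k[E]\pm p_k[F]$ (which you correctly isolate as the only foundational input; note it is additivity rather than ``multiplicativity'') is immediate from substituting $k$-th powers into the formal series $E\pm F$ --- indeed the paper itself records the special case $p_k[-X]=-p_k[X]$ just before stating the lemma. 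The sign bookkeeping in the second identity, where $-\tfrac{(-1)^{k-1}}{k}z^k=\tfrac{1}{k}(-z)^k$ converts the $F$-factor into $\sum_n h_n[F](-z)^n$, is exactly right and yields the stated $(-1)^{n-j}$. Your argument is essentially the textbook proof one would find in Haglund or Macdonald, so it fills the gap the paper leaves by citation; nothing is missing.
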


\subsection{The Pieri rule on Schur functions}

In this subsection, we present two symmetric function identities, 
which are likely known in the literature. For completeness, 
we provide proofs based on the Pieri rule.

We begin by recalling the Pieri rule for Schur functions; 
see Stanley~\cite{Stanley1999} for details. For any $i \geq 1$, 
we have
\begin{align*}
 e_{i}(\x)s_{\lambda}(\x)=\sum_{\mu}s_{\mu}(\x),
\end{align*}
where the sum is over all partitions $\mu$ such that 
$\mu/\lambda$ is a vertical strip of size $i$. Similarly,
\begin{align*}
 h_{i}(\x)s_{\lambda}(\x)=\sum_{\mu}s_{\mu}(\x),
\end{align*}
where the sum is over all partitions $\mu$ such that 
$\mu/\lambda$ is a horizontal strip of size $i$.

We now turn to the first symmetric function identity, which 
can be stated as follows.

\begin{lemma} \label{lemma-pieri-1}
For $m \ge 0$ and $k \ge 1$, we have
$$
 \sum_{j=0}^{m}(-1)^{j}\, e_{m-j}(\x)\, s_{(j+2,2^{k-1})}(\x)
 = s_{(2^{k}, 1^{m})}(\x).
$$
\end{lemma}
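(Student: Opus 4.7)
The plan is to apply Pieri's rule to each summand $e_{m-j}(\x)\,s_{(j+2,2^{k-1})}(\x)$, interchange the order of summation, and observe a sign-reversing cancellation that leaves only a single Schur function.

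By Pieri's rule, $e_{m-j}(\x)\,s_{(j+2,2^{k-1})}(\x) = \sum_\mu s_\mu(\x)$, where $\mu$ runs over the partitions obtained from $(j+2, 2^{k-1})$ by adding a vertical strip of size $m-j$. Every such $\mu$ has the form
\[
\mu = (j+2+\epsilon_1,\, 2+\epsilon_2,\,\ldots,\, 2+\epsilon_k,\, 1^\beta),
\]
with $\epsilon_i \in \{0,1\}$ and $\beta \ge 0$ satisfying $\epsilon_1 + \cdots + \epsilon_k + \beta = m - j$. Since rows past $k$ have entries in $\{0,1\}$ while rows $2,\ldots,k$ have entries in $\{2,3\}$, the only potentially binding partition inequality is $\mu_1 \ge \mu_2$, which when $j = 0$ forces $\epsilon_1 \ge \epsilon_2$ and imposes no restriction otherwise.

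Next, I would swap the summations and, for each candidate partition $\mu$ arising on the right, collect all pairs $(j, \epsilon_1)$ that realize it, weighted by $(-1)^j$. Setting $r := \mu_1$, the equation $j + \epsilon_1 = r - 2$ admits at most the two choices $(r-2, 0)$ and $(r-3, 1)$, which carry opposite signs. When $r \ge 4$, both choices have $j \ge 1$ and are admissible, so they cancel. When $r = 3$, both remain admissible---the constraint $\epsilon_1 \ge \epsilon_2$ at $j = 0$ holds automatically since $\epsilon_1 = 1$---and again cancel. When $r = 2$, only $(0, 0)$ is available; the constraint $\epsilon_1 \ge \epsilon_2$ then forces $\epsilon_2 = 0$, which together with the partition condition on $\mu$ cascades to $\epsilon_i = 0$ for every $2 \le i \le k$.

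Hence the only surviving contribution is $\mu = (2^k, 1^\beta)$ with $\beta = m$, which gives the desired identity. I expect the main subtlety to be the boundary case $r = 2$: it is precisely the partition inequality at $j = 0$ that breaks the otherwise-symmetric pairwise cancellation and singles out $s_{(2^k, 1^m)}(\x)$ as the unique uncancelled term.
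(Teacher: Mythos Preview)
Your argument is correct and takes essentially the same approach as the paper: both expand each summand via Pieri's rule and then exhibit a sign cancellation. You organize the cancellation as a pairing of the two admissible choices of $(j,\epsilon_1)$ for each fixed target partition $\mu$, while the paper carries it out as an explicit telescoping after reindexing $j\mapsto j-1$ in the Pieri expansion; these are two presentations of the same mechanism.
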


\begin{proof}
When $m=0$, the statement reduces to 
$e_0(\x)s_{(2^k)}(\x)=s_{(2^k)}(\x)$, which is immediate. 

Assume $m \ge 1$.
Consider $e_{m-j}(\x) s_{(j+2,2^{k-1})}(\x)$ for $0 \le j \le m$. 
For $j=0$ and $j=m$, the Pieri rule gives
\begin{align}\label{pieri-1}
 e_{m}(\x) s_{(2^{k})}(\x)
 =
 \sum_{x=0}^{\min\{m,k\}} s_{(3^{x}, 2^{k-x}, 1^{m-x})}(\x),
 \qquad
 e_{0}(\x) s_{(m+2,2^{k-1})}(\x)=s_{(m+2,2^{k-1})}(\x).
\end{align}
Now suppose $1 \le j \le m-1$ (hence $m \ge 2$). 
Applying the Pieri rule to a vertical strip yields two contributions, 
depending on whether the top row acquires a box. That is,
\begin{align*}
 s_{(1^{m-j})}(\x) s_{(j+2,2^{k-1})}(\x)
 =&
 \sum_{x=0}^{\min\{k-1,m-j\}}
 s_{(j+2,3^x, 2^{k-1-x}, 1^{m-j-x})}(\x)
\\
&\qquad \qquad
 +
 \sum_{x=0}^{\min\{k-1,m-j-1\}}
 s_{(j+3,3^x, 2^{k-1-x}, 1^{m-j-1-x})}(\x).
\end{align*}
Summing over $j=1,\dots,m-1$ with the coefficient $(-1)^j$ and reindexing the second sum by $j\mapsto j-1$ yields 
\begin{align*}
\sum_{j=1}^{m-1}(-1)^j s_{(1^{m-j})}(\x) s_{(j+2,2^{k-1})}(\x)
&=
\sum_{j=1}^{m-1}\sum_{x=0}^{\min\{k-1,\,m-j\}}(-1)^j s_{(j+2,3^x,2^{k-1-x},1^{m-j-x})}(\x)
\\
&\quad
+\sum_{j=2}^{m}\sum_{x=0}^{\min\{k-1,m-j\}}(-1)^{j-1} s_{(j+2,3^x,2^{k-1-x},1^{m-j-x})}(\x)
\\
&=(-1)^{m-1}s_{(m+2,2^{k-1})}(\x)
 -\sum_{x=1}^{\min\{m,k\}} s_{(3^x,2^{k-x},1^{m-x})}(\x).
\end{align*}
Combining \eqref{pieri-1} with the above expression, we obtain
\begin{align*}
\sum_{j=0}^{m}(-1)^{j} e_{m-j}(\x) s_{(j+2,2^{k-1})}(\x)
=&
(-1)^m s_{(m+2,2^{k-1})}(\x)
+\sum_{x=0}^{\min\{m,k\}} s_{(3^x,2^{k-x},1^{m-x})}(\x)
\\
&
+\Big((-1)^{m-1}s_{(m+2,2^{k-1})}(\x)
 -\sum_{x=1}^{\min\{m,k\}} s_{(3^x,2^{k-x},1^{m-x})}(\x)\Big)
\\
=& s_{(3^0,2^{k-0},1^{m-0})}(\x)
= s_{(2^{k},1^{m})}(\x),
\end{align*}
which completes the proof.
\end{proof}

We now present the second symmetric function identity.

 \begin{lemma}\label{lem-pire-2}
 For $n\geq 0$ and $0 \leq k \leq \lfloor \frac{n}{2} \rfloor $, we have 
 $$\sum_{i=0}^{n-2k} e_{i}(\x) s_{(2^{k}, 1^{n-2k-i})}(\x) = \sum_{i=0}^{k} \sum_{j=k-i}^{\lfloor \frac{n-3i}{2} \rfloor} (n-3i-2j+1) s_{(3^i,2^j,1^{n-3i-2j})}(\x). $$
 \end{lemma}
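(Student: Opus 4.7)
The plan is to expand each summand on the left-hand side via the Pieri rule for Schur functions, parametrize the resulting partitions combinatorially, and then collect terms by partition shape to match the right-hand side.

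First, I will apply the Pieri rule to each product $e_i(\x)\, s_{(2^k, 1^{n-2k-i})}(\x)$, obtaining $\sum_\mu s_\mu(\x)$ where $\mu$ runs over partitions such that $\mu / (2^k, 1^{n-2k-i})$ is a vertical strip of size $i$. Since a vertical strip adds at most one box per row and the rows of $(2^k, 1^{n-2k-i})$ have length $2$ or $1$, every such $\mu$ has parts in $\{1, 2, 3\}$. I will encode each such $\mu$ by a triple $(a, b, c)$ of nonnegative integers with $a + b + c = i$, where $a$ is the number of length-$2$ rows of $(2^k,1^{n-2k-i})$ promoted to length $3$, $b$ is the number of length-$1$ rows promoted to length $2$, and $c$ is the number of brand-new length-$1$ rows appended at the bottom. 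The admissibility constraints are $a \le k$ and $b \le n - 2k - i$, and the resulting partition is
\[
\mu \;=\; (3^{a},\, 2^{k-a+b},\, 1^{n-2k-i-b+c}),
\]
which has exactly $n$ boxes.

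Next, I will reindex by setting $i' = a$ and $j = k - a + b$, so that $\mu = (3^{i'}, 2^{j}, 1^{n-3i'-2j})$, exactly the shape indexing the right-hand side. For each fixed $(i', j)$ with $0 \le i' \le k$ and $k - i' \le j \le \lfloor (n-3i')/2 \rfloor$, I will count the number of values of $i$ (equivalently, of triples $(a,b,c)$) for which the Pieri expansion produces this $\mu$. The conditions $c \ge 0$ and $b \le n - 2k - i$ translate to
\[
2i' + j - k \;\le\; i \;\le\; n - k - i' - j,
\]
giving exactly $(n - 3i' - 2j + 1)$ admissible values of $i$, which matches the coefficient on the right-hand side. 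Summing over all $(i', j)$ yields the identity.

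The main obstacle is essentially enumerative bookkeeping: verifying that the parametrization $\mu \leftrightarrow (a,b,c)$ is a bijection with the partitions appearing in the Pieri expansion (so no $\mu$ is double-counted), and that the lower and upper bounds on $i$ are mutually compatible and consistent with the global constraint $0 \le i \le n - 2k$, especially at the extremes $j = k - i'$ (forcing $b = 0$) and $j = \lfloor (n - 3i')/2 \rfloor$ (saturating the room for length-$1$ rows). Neither obstacle is serious, and the identity reduces to the direct counting sketched above.
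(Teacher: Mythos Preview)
Your proposal is correct and follows essentially the same argument as the paper's own proof. The paper parametrizes the vertical strips by a pair $(x,y)$ (your $(a,b)$, with $c=i-a-b$ implicit), performs the same substitution $j=k-x+y$, interchanges the order of summation, and reads off the range $j-k+2x \le i \le n-k-x-j$ to obtain the multiplicity $n-3x-2j+1$; after renaming $x\mapsto i$ this is exactly your count.
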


 \begin{proof}
Fix $n$ and $k$. For $0\le i\le n-2k$, the Pieri rule gives
 \begin{align*}
 e_{i}(\x) s_{(2^{k}, 1^{n-2k-i})}(\x) &= \sum_{x=0}^{\mathrm{min} \{ k, i\}} \sum_{y=0}^{\mathrm{min} \{ i-x, n-2k-i\}}s_{(3^x, 2^{k-x+y}, 1^{n-2k-x-2y})} (\x).
 \end{align*}
Observe that when $x>i$, the inner sum is empty and hence contributes zero. 
 Extending the upper limit of $x$ to $k$ and setting $j = k-x+y$, we obtain
\begin{align*}
 e_{i}(\x) s_{(2^{k}, 1^{n-2k-i})}(\x) &=\sum_{x=0}^{k} \sum_{j=k-x}^{\mathrm{min} \{ i+k-2x, n-k-i-x\}}s_{(3^x, 2^{j}, 1^{n-3x-2j})} (\x).
 \end{align*}
Summing over $i=0,\dots,n-2k$ and exchanging the order of summation yields
 \begin{align*}
 \sum_{i=0}^{n-2k} e_{i}(\x) s_{(2^{k}, 1^{n-2k-i})}(\x) &=
 \sum_{i=0}^{n-2k} \sum_{x=0}^{k} \sum_{j=k-x}^{\mathrm{min} \{ i+k-2x, n-k-i-x\}}s_{(3^x, 2^{j}, 1^{n-3x-2j})}(\x) \\
 &=\sum_{x=0}^{k} \sum_{j=k-x}^{\lfloor \frac{n-3x}{2} \rfloor} \sum_{i=j-k+2x}^{n-k-x-j}s_{(3^x, 2^{j}, 1^{n-3x-2j})}(\x) \\
 &=\sum_{x=0}^{k} \sum_{j=k-x}^{\lfloor \frac{n-3x}{2} \rfloor}(n-3x-2j+1)s_{(3^x, 2^{j}, 1^{n-3x-2j})}(\x).
 \end{align*}
Replacing the index $x$ by $i$ gives the stated identity.
\end{proof}

\section{Equivariant inverse Kazhdan--Lusztig polynomials}\label{sec-equ-inv}

In this section, we prove Theorem~\ref{thm-equi}, which gives an explicit formula for the equivariant inverse Kazhdan-–Lusztig polynomials of the thagomizer matroids under the natural $\mathfrak{S}_n$-action. As a corollary, we also recover the non-equivariant case, stated in Theorem~\ref{thm-ord}.

\subsection{Proof of Theorem \ref{thm-equi}}

We begin by recalling the structure of the flats of $T_n$, originally described by Gedeon~\cite[Section~3]{gedeon2017kazhdan}.
Let $AB$ be the distinguished edge in $T_n$. For each $j\in \{1,\ldots,n\}$, the subgraph consisting of edges
$Aj$ and $Bj$ is called a spike. 
Since $T_n$ has rank $n+1$, any flat $F \in \mathcal{L}(T_n)$ of rank $\rk(F) = i$, where $0 \leq i \leq n+1$, necessarily falls into one of the following two categories:
\begin{itemize}
\item[$(1)$] $F$ contains exactly one edge from each of $i$ distinct spikes;
 or
\item[$(2)$] $F$ consists of the union of $i - 1$ complete spikes together with the distinguished edge $AB$. 
\end{itemize}
In the first case, the restriction $T_n|_F $ is a Boolean matroid of rank $i$, while the contraction $T_n/F$ is a matroid whose lattice of flats is isomorphic to that of $ T_{n-i}$. In the second case, the restriction $T_n|_F$ corresponds to a matroid whose lattice of flats is isomorphic to that of $ T_{i-1}$, and the contraction $T_n/F $ is a Boolean matroid of rank $ n-i+1 $.
Moreover, the number of flats of the first type with rank $i$ is 
$\binom{n}{i} \cdot 2^{i}$, and the number of flats of the second type 
with rank $i+1$ is $\binom{n}{i}$.

Let $P_n(\x;t)$ and $Q_n(\x;t)$ denote the images of the equivariant 
Kazhdan--Lusztig polynomial $P^{\mathfrak{S}_n}_{T_n}(t)$ and the equivariant 
inverse Kazhdan--Lusztig polynomial $Q^{\mathfrak{S}_n}_{T_n}(t)$ under the Frobenius 
characteristic map, respectively. 
An explicit formula for $P^{\mathfrak{S}_n}_{T_n}(t)$ and $P_n(\x;t)$ was obtained by Xie and Zhang~\cite{xie2019equivariant}. 
To prove Theorem~\ref{thm-equi}, we first establish a relation between $Q_n(\x;t)$ and $P_i(\x;t)$.

\begin{lemma}\label{lem-equi}
For any nonnegative integer $n$, we have
\begin{align}\label{lem-equi-equ}
Q_{n}(\x;t)=\sum_{i=0}^{n}(-1)^i e_{n-i}[3X]P_{i}(\x;t).
\end{align}
\end{lemma}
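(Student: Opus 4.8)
The plan is to derive the stated formula by applying the defining recursion for the equivariant inverse Kazhdan--Lusztig polynomial, pushed through the Frobenius characteristic map, and then extracting $Q_n(\x;t)$ by a Möbius-type inversion. Concretely, I would first write down the defining equation for $Q^{\mathfrak S_n}_{T_n}(t)$ with $\M=T_n$ and $W=\mathfrak S_n$, group the flats of $\L(T_n)$ into the two orbit types recalled above (Boolean flats of rank $i$, coming in $\binom{n}{i}2^i$ copies with stabilizer $\mathfrak S_i\times\mathfrak S_{n-i}$ acting suitably, and complete-spikes-plus-$AB$ flats of rank $i+1$, coming in $\binom{n}{i}$ copies), and rewrite the induction product $\Ind^{W}_{W_F}(\cdot)$ using the multiplicativity $\ch\,\Ind(V_1\otimes V_2)=\ch(V_1)\ch(V_2)$. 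The restriction and contraction data are known: for a type-(1) flat of rank $i$, $T_n|_F$ is Boolean so $Q^{W_F}_{T_n|_F}(t)$ contributes the trivial representation (i.e.\ $1$ after $\ch$, since $\rk=i$ forces it in degree $0$... actually its $\ch$ is $h_i$ on the appropriate variables), while $T_n/F$ has the flat lattice of $T_{n-i}$ so contributes $H^{\mathfrak S_{n-i}}_{T_{n-i}}(t^{-1})$; for a type-(2) flat, the roles swap. The characteristic polynomial data $H^{W_F}_{\M/F}$ must be expressed via the known $H_n(\x;t)$ (equivariant characteristic polynomial of $T_n$) and the Boolean case, where $H$ is, up to sign and $t$-powers, an elementary symmetric function.

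Once the right-hand side of the recursion is assembled as a sum over $i$ of products of $\ch$'s, the equation reads schematically
\begin{align*}
(-1)^{n+1}t^{n+1}Q_n(\x;t^{-1})
= \sum_{i} \bigl(\text{known }P\text{- and }H\text{-data}\bigr),
\end{align*}
and the task is to solve for $Q_n(\x;t)$. The key structural observation I would exploit is that the two flat types interact so that, after collecting terms, the right-hand side factors through the generating identity $e_n[3X]=\sum_{j}e_j[X]e_{n-j}[2X]$-type relations from Lemma~\ref{lem:hung-1}; the $3X$ in the statement is exactly the plethystic shadow of ``one edge from a spike'' (the $2$) together with the $AB$-type contribution packaging into a $3$. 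I expect the cleanest route is to first prove the ``forward'' relation
\begin{align*}
P_n(\x;t) = \sum_{i=0}^{n} e_{n-i}[3X]\,Q_i(\x;t)
\end{align*}
directly from the recursion (this is the more natural direction, since $P$ appears with unit coefficients over all flats), and then invert it. The inversion is formal: the lower-triangular ``matrix'' with entries $e_{n-i}[3X]$ has inverse with entries $(-1)^{n-i}h_{n-i}[3X]$ by the standard $e$--$h$ involution, but one must check that $(-1)^{n-i}h_{n-i}[3X]$ equals $(-1)^{i}e_{n-i}[-3X]$-style expression so that it matches the $(-1)^i e_{n-i}[3X]$ claimed --- here the second identity in Lemma~\ref{lem:hung-1}, $e_n[E-F]=\sum_j(-1)^{n-j}e_j[E]h_{n-j}[F]$ with $E=0$, gives $e_n[-F]=(-1)^n h_n[F]$, so $(-1)^{n-i}h_{n-i}[3X]=e_{n-i}[-3X]$; I would need to reconcile signs carefully, and it is possible the intended reading is $e_{n-i}[3X]$ with an overall alternating sign absorbed as written. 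This sign bookkeeping, together with correctly identifying the $H$-data for both flat types, is the main obstacle.

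I would therefore organize the proof as: (i) instantiate the defining recursion for $Q^{\mathfrak S_n}_{T_n}$, split over the two orbit types, and apply $\ch$ together with $\Ind$-multiplicativity to get an equation in $\Lambda[t]$; (ii) substitute the known $\ch$-images of the equivariant characteristic polynomials (Boolean and $T_m$) and simplify the $t$-powers and signs, thereby obtaining a clean relation between $\{Q_i(\x;t)\}_{i\le n}$ and $P_n(\x;t)$; (iii) invoke Lemma~\ref{lem:hung-1} (and the $e$--$h$ plethystic involution) to invert this relation and land on \eqref{lem-equi-equ}. Steps (i) and (iii) are essentially formal; the real work is step (ii), verifying that all the combinatorial multiplicities $\binom{n}{i}2^i$ and $\binom{n}{i}$, the stabilizer-induction bookkeeping, and the $\chi_{\M/F}(t^{-1})$ factors conspire to produce exactly the plethysm $e_{n-i}[3X]$ and nothing more --- in particular that no residual dependence on $t$ survives in the coefficients, which is what makes the final formula a clean convolution of the $P_i$'s with $t$-independent symmetric functions. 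The hardest single point is pinning down the equivariant characteristic polynomial contributions and checking the degree bound $\deg Q_n<\tfrac12(n+1)$ is automatically respected by the resulting expression, so that uniqueness in Proudfoot's characterization lets us conclude.
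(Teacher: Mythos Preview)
Your outline has the right general shape --- split the flats of $T_n$ into the two orbit types, push through $\ch$, then invert --- but the starting point is off, and this propagates into the later confusion.

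The paper does \emph{not} use the defining recursion for $Q^W_{\M}$ (the one involving $H^{W_F}_{\M/F}$). That recursion relates $Q_n(\x;t^{-1})$ to the $Q_i(\x;t)$'s via characteristic-polynomial data; it contains no $P$ at all, so it cannot by itself yield a relation between $Q_i$'s and $P_i$'s, and solving it would indeed require the palindromy-plus-degree-bound argument you mention at the end. Instead, the paper invokes Proudfoot's inversion identity
\[
\sum_{[F]\in\L(\M)/W}(-1)^{\rk(\M|_F)}\,\Ind^{W}_{W_F}\!\bigl(Q^{W_F}_{\M|_F}(t)\otimes P^{W_F}_{\M/F}(t)\bigr)=0,
\]
which directly links $Q$ to $P$. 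Instantiating this for $T_n$ and applying $\ch$ gives
\[
\sum_{i=0}^{n}(-1)^{i}Q_{i}(\x;t)\,h_{n-i}(\x)
=\sum_{i+j+m=n}(-1)^{m+j}e_{j}(\x)\,e_{m}(\x)\,P_{i}(\x;t),
\]
and one then passes to generating functions in $u$, multiplies by the extra factor $\sum_n(-1)^n e_n(\x)u^n$ (using $(\sum h_n u^n)(\sum(-1)^n e_n u^n)=1$), and reads off $(-1)^nQ_n=\sum_i(-1)^{n-i}e_{n-i}[3X]P_i$. No degree bound is ever needed.

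Two smaller points that stem from the same confusion: (i) $Q^{\mathfrak S_i}_{B_i}(t)=V_{(1^i)}$ is the sign representation, so its $\ch$-image is $e_i$, not $h_i$; and (ii) your proposed ``forward'' relation $P_n=\sum_i e_{n-i}[3X]\,Q_i$ is not the inverse of the target formula. The correct inverse is $P_n=\sum_i(-1)^i h_{n-i}[3X]\,Q_i$, which is exactly what the generating-function identity above encodes. Your sign-reconciliation worry was well founded: the forward relation as you wrote it simply does not invert to \eqref{lem-equi-equ}.
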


\begin{proof}
Within the framework of equivariant Kazhdan--Lusztig--Stanley theory, Proudfoot~\cite{proudfoot2021equivariant} established that 
\begin{align}\label{equi-equ1}
\sum_{[F] \in \L(\M)/W} (-1)^{\rk(\M|_F) } \Ind^{W}_{W_F}\left({Q^{W_F}_{\M|_F}}(t) \otimes {P^{W_F}_{\M/F}}(t)\right)=0
\end{align}
Applying \eqref{equi-equ1} to the equivariant thagomizer matroid $\mathfrak{S}_n \curvearrowright T_n$, we obtain
\begin{align*}
\sum_{[F] \in \mathcal{L}(T_n)/\mathfrak{S}_n} (-1)^{\rk (T_n|_F) } \Ind^{\mathfrak{S}_n}_{(\mathfrak{S}_n)_F}\left({Q^{(\mathfrak{S}_n)_F}_{T_n|_F}}(t) \otimes {P^{(\mathfrak{S}_n)_F}_{T_n/F}}(t)\right)=0.
\end{align*}
Substituting the classification of flats of $T_n$, we get
\begin{align}
&\sum_{i=0}^{n}(-1)^{i+1}\Ind^{\mathfrak{S}_n}_{\mathfrak{S}_{i} \times \mathfrak{S}_{n-i}}\left({Q^{\mathfrak{S}_{i} \times \mathfrak{S}_{n-i}}_{T_i}}(t) \otimes {P^{\mathfrak{S}_{i} \times \mathfrak{S}_{n-i}}_{B_{n-i}}}(t) \right) \notag \\
&+ \sum_{i+j+m=n}(-1)^{j+m}\Ind^{\mathfrak{S}_n}_{\mathfrak{S}_{j} \times \mathfrak{S}_{m} \times \mathfrak{S}_i} \left(Q^{\mathfrak{S}_{j} \times \mathfrak{S}_{m} \times \mathfrak{S}_i}_{B_j}(t) \otimes Q^{\mathfrak{S}_{j} \times \mathfrak{S}_{m} \times \mathfrak{S}_i}_{B_m}(t) \otimes P^{\mathfrak{S}_{j} \times \mathfrak{S}_{m} \times \mathfrak{S}_i}_{T_i}(t) \right)=0, \label{eq:equiva_thg_1}
\end{align}
where $B_n$ is the Boolean matroid of rank $n$.
Rearranging \eqref{eq:equiva_thg_1}, we have
\begin{align}\label{lem-equi-eq1}
&\sum_{i=0}^{n}(-1)^{i}\Ind^{\mathfrak{S}_n}_{\mathfrak{S}_{i} \times \mathfrak{S}_{n-i}}\left({Q^{\mathfrak{S}_{i}}_{T_i}}(t) \otimes {P^{\mathfrak{S}_{n-i}}_{B_{n-i}}}(t) \right)\nonumber\\
&\qquad \qquad= \sum_{i+j+m=n}(-1)^{m+j}\Ind^{\mathfrak{S}_n}_{\mathfrak{S}_{j} \times \mathfrak{S}_{m} \times \mathfrak{S}_i} \left(Q^{\mathfrak{S}_{j}}_{B_j}(t) \otimes Q^{\mathfrak{S}_{m}}_{B_m}(t) \otimes P^{\mathfrak{S}_i}_{T_i}(t) \right).
\end{align}

Recall that for any equivariant Boolean matroid $\mathfrak{S}_n \curvearrowright B_n$ with $n \geq 0$, we have $P^{\mathfrak{S}_n}_{B_n}(t)=V_{(n)}$ and $Q^{\mathfrak{S}_n}_{B_n}(t)=V_{(1^n)}$; see \cite[Lemma 3.5]{gao2022equivariant} and \cite[Theorem 3.4]{gao2022equivariant}.
Applying the Frobenius characteristic map, the left-hand side of \eqref{lem-equi-eq1} becomes
\begin{align*}
\ch \left(\sum_{i=0}^{n}(-1)^{i}\Ind^{\mathfrak{S}_n}_{\mathfrak{S}_{i} \times \mathfrak{S}_{n-i}}\left({Q^{\mathfrak{S}_{i}}_{T_i}}(t) \otimes {P^{\mathfrak{S}_{n-i}}_{B_{n-i}}}(t) \right)\right)
=&\sum_{i=0}^{n}(-1)^{i}\ch~{Q^{\mathfrak{S}_{i}}_{T_i}}(t) \cdot \ch~{P^{\mathfrak{S}_{n-i}}_{B_{n-i}}}(t)\\
=&\sum_{i=0}^{n}(-1)^{i}Q_{i}(\x;t)h_{n-i}(\x),
\end{align*}
where the last equality follows from $\ch~ V_{(n-i)}=h_{n-i}(\x)$. For the right-hand side of \eqref{lem-equi-eq1}, we obtain
\begin{align*}
&\ch \left(\sum_{i+j+m=n}(-1)^{m+j}\Ind^{\mathfrak{S}_n}_{\mathfrak{S}_{j} \times \mathfrak{S}_{m} \times \mathfrak{S}_i} \left(Q^{\mathfrak{S}_{j}}_{B_j}(t) \otimes Q^{\mathfrak{S}_{m}}_{B_m}(t) \otimes P^{\mathfrak{S}_i}_{T_i}(t) \right) \right)\\
=&\sum_{i+j+m=n}(-1)^{m+j}\ch~Q^{\mathfrak{S}_{j}}_{B_j}(t) \cdot \ch~Q^{\mathfrak{S}_{m}}_{B_m}(t) \cdot \ch~P^{\mathfrak{S}_i}_{T_i}(t)\\
=&\sum_{i+j+m=n}(-1)^{m+j}e_{j}(\x)e_{m}(\x)P_{i}(\x;t),
\end{align*}
where the last equality follows because $\ch~ V_{(1^n)}=e_{n}(\x)$. Thus, \eqref{lem-equi-eq1} is equivalent to
\begin{align}\label{lem-equi-eq2}
\sum_{i=0}^{n}(-1)^{i}Q_{i}(\x;t)h_{n-i}(\x)=\sum_{i+j+m=n}(-1)^{m+j}e_{j}(\x)e_{m}(\x)P_{i}(\x;t).
\end{align}

Now, multiply both sides of \eqref{lem-equi-eq2} by $u^n$ and then sum over all $n \geq 0$. For the left-hand side of \eqref{lem-equi-eq2}, we have
\begin{align}\label{eq:gener-equi-fian-1-1}
\sum_{n=0}^{\infty}\left(\sum_{i=0}^{n}(-1)^{i}Q_{i}(\x;t)h_{n-i}(\x)\right)u^{n}
&=\left(\sum_{n=0}^{\infty}(-1)^{n}Q_{n}(\x;t)u^{n}\right) \left(\sum_{n=0}^{\infty}h_{n}(\x)u^{n}\right).
\end{align}
Similarly, the right-hand side of \eqref{lem-equi-eq2} becomes
\begin{align}\label{eq:gener-equi-fian-1-2}
&\sum_{n=0}^{\infty}\left( \sum_{i+j+m=n}(-1)^{m+j}e_{j}(\x)e_{m}(\x)P_{i}(\x;t) \right)u^{n}
=\left(\sum_{n=0}^{\infty}(-1)^{n}e_{n}(\x)u^{n}\right)^2\left(\sum_{n=0}^{\infty}P_{n}(\x;t)u^{n}\right).
\end{align}
From Stanley \cite[Section 7] {Stanley1999}, it is well known that
\begin{align*}
\sum_{n=0}^{\infty}h_{n}(\x)u^{n}=\prod_{i=1}^{\infty}\frac{1}{1-x_{i}u}
\qquad \text{and}
\qquad 
\sum_{n=0}^{\infty}e_{n}(\x)u^{n}=\prod_{i=1}^{\infty}(1+x_{i}u).
\end{align*}
This implies 
\begin{align}\label{eq:gener-equi-fian-1-3}
\left(\sum_{n=0}^{\infty}h_{n}(\x)u^{n}\right)\left(\sum_{n=0}^{\infty}(-1)^{n}e_{n}(\x)u^{n}\right)=1.
\end{align}
Hence, combining \eqref{lem-equi-eq2}, \eqref{eq:gener-equi-fian-1-1}, \eqref{eq:gener-equi-fian-1-2}, and \eqref{eq:gener-equi-fian-1-3}, we deduce
\begin{align}\label{eq:gener-equi-fian-1-4}
\sum_{n=0}^{\infty}(-1)^{n}Q_{n}(\x;t)u^{n}
=\left(\sum_{n=0}^{\infty}(-1)^{n}e_{n}(\x)u^{n}\right)^3\left(\sum_{n=0}^{\infty}P_{n}(\x;t)u^{n}\right).
\end{align}
Recalling that $e_{n}(\x)=e_{n}[X]$, and then applying Lemma \ref{lem:hung-1}, we get that
\begin{align*}
\left(\sum_{n=0}^{\infty}(-1)^{n}e_{n}(\x)u^{n}\right)^3
=&\sum_{n=0}^{\infty}(-1)^{n}\left(\sum_{i+j+k=n}e_{i}[X]e_{j}[X]e_{k}[X]\right)u^{n}\\
=&\sum_{n=0}^{\infty}(-1)^{n}e_{n}[3X]u^{n}.
\end{align*}
Thus, rewriting \eqref{eq:gener-equi-fian-1-4} gives
\begin{align}
\sum_{n=0}^{\infty}(-1)^{n}Q_{n}(\x;t)u^{n}
=&\left(\sum_{n=0}^{\infty}(-1)^{n}e_{n}[3X]u^{n}\right)
\left(\sum_{n=0}^{\infty}P_{n}(\x;t)u^{n}\right) \notag \\
&=\sum_{n=0}^{\infty}
\sum_{i=0}^{n}(-1)^{n-i}e_{n-i}[3X]P_{i}(\x;t)u^n.\label{lem-equi-eq3}
\end{align}
By comparing coefficients of $u^{n}$ on both sides of equation \eqref{lem-equi-eq3}, we complete the proof.
\end{proof}

We now present an alternative expression for $Q^{\mathfrak{S}_n}_{T_n}(t)$, in close analogy with the formula of Xie and Zhang for $P^{\mathfrak{S}_n}_{T_n}(t)$~\cite{xie2019equivariant}. Although distinct from the expression in Theorem~\ref{thm-equi}, this expression will be a key ingredient in the proofs of Theorems~\ref{thm-equi} and \ref{thm-ord}.

\begin{proposition}\label{equi-pro}
For any nonnegative integer $n$, we have
\begin{align*}
 {Q^{\mathfrak{S}_n}_{T_n}}(t)
 = \sum_{k=0}^{\lfloor \frac{n}{2} \rfloor }\left(\sum_{i=0}^{n-2k} \Ind^{\mathfrak{S}_n}_{\mathfrak{S}_{i} \times \mathfrak{S}_{n-i}} V_{(1^i)} \otimes V_{(2^{k}, 1^{n-2k-i})} \right) t^{k}.
\end{align*}
\end{proposition}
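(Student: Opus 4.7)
My plan is to translate \cref{equi-pro} through the Frobenius characteristic map $\ch$, and then prove the resulting symmetric-function identity by combining \cref{lem-equi}, the Xie–Zhang formula \cite{xie2019equivariant} for $P^{\mathfrak{S}_n}_{T_n}(t)$, and the Pieri identity in \cref{lemma-pieri-1}.

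Applying $\ch$ together with $\ch V_{(1^i)} = e_i(\x)$, $\ch V_{(2^k,1^{n-2k-i})} = s_{(2^k,1^{n-2k-i})}(\x)$, and the multiplicativity of $\ch$ on induced representations, \cref{equi-pro} reduces to the identity
\[
Q_n(\x;t) \;=\; \sum_{k=0}^{\lfloor n/2 \rfloor} t^k \sum_{i=0}^{n-2k} e_i(\x)\, s_{(2^k,1^{n-2k-i})}(\x).
\]

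To prove this, I would start from \cref{lem-equi}, which gives $Q_n(\x;t) = \sum_{i=0}^{n}(-1)^i\, e_{n-i}[3X]\, P_i(\x;t)$, and substitute the Xie–Zhang formula for $P_i(\x;t)$, expressed in the form in which, at each degree $k \geq 1$, the $t^k$-coefficient is a sum of products $s_{(j+2,2^{k-1})}(\x)\cdot s_{(i-2k-j)}(\x)$. The critical point is that Schur functions of the shape $(j+2,2^{k-1})$ are exactly the ones on which \cref{lemma-pieri-1} operates. After interchanging summation orders and reindexing via $i = i' + 2k + j$, the alternating sum over $i$ for each fixed $(k,j)$ separates into two pieces. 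One piece is exactly the $k=0$ case of the target identity (which can be checked independently via the generating-function identity $H(-u)E(u)^3 = E(u)^2$, itself an easy consequence of \cref{lem:hung-1}), and contributes a convolution $\sum_p e_p(\x)\, e_{n-2k-j-p}(\x)$. The other piece is $\sum_j(-1)^j e_{m-j}(\x)\, s_{(j+2,2^{k-1})}(\x)$, which by \cref{lemma-pieri-1} collapses to $s_{(2^k,1^m)}(\x)$. Combining the two and renaming the outer variable to $i$ recovers $\sum_i e_i(\x)\, s_{(2^k,1^{n-2k-i})}(\x)$, as required.

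The main obstacle is bookkeeping: Xie–Zhang's double sum, the plethystic expansion $e_{n-i}[3X] = \sum_j e_j(\x)\, e_{n-i-j}[2X]$ from \cref{lem:hung-1}, and the alternating signs $(-1)^i$ from \cref{lem-equi} together create several nested indices, and correctly identifying the reordering in which \cref{lemma-pieri-1} fires exactly once per pair $(k,i)$—leaving behind the clean factor $e_i(\x)\, s_{(2^k,1^{n-2k-i})}(\x)$ and no stray Schur terms of the wrong shape—is the key technical step.
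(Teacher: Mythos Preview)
Your proposal is correct and follows essentially the same route as the paper's proof: both reduce to the symmetric-function identity via $\ch$, invoke \cref{lem-equi}, substitute the Xie--Zhang expression for $P_i(\x;t)$, collapse the resulting alternating sums over $h$'s and $e$'s using the plethystic identities of \cref{lem:hung-1} (your $H(-u)E(u)^3=E(u)^2$ is the generating-function form of the paper's computation $\sum_i(-1)^i e_{n-i}[3X]h_i[X]=e_n[2X]$), and then apply \cref{lemma-pieri-1} to the remaining sum $\sum_j(-1)^j e_{m-j}(\x)\,s_{(j+2,2^{k-1})}(\x)$ to produce $s_{(2^k,1^m)}(\x)$. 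The paper organizes the bookkeeping by first isolating a closed form $Q_n(\x;t)=e_n[2X]+\sum_{k\ge1}t^k\sum_j(-1)^j s_{(j+2,2^{k-1})}(\x)\,e_{n-2k-j}[2X]$ and only then expanding $e_{n-2k-j}[2X]$ to match the target, whereas you sketch the expansions as happening more simultaneously; but the ingredients and the logical flow are the same.
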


\begin{proof}
Applying the Frobenius characteristic map, it suffices to show that for all $n \geq 0$, 
\begin{align}\label{equi-pro-eq1}
 Q_n(\x;t)= \sum_{k=0}^{\lfloor \frac{n}{2} \rfloor }\left(\sum_{i=0}^{n-2k} e_{i}(\x) s_{(2^{k}, 1^{n-2k-i})}(\x) \right) t^{k}.
\end{align}
From Xie and Zhang \cite[Theorem 2.1]{xie2019equivariant}, we have

\[
P_i(\x;t)=
\begin{dcases}
1, & i=0,\\[2pt]
h_i(\x)\;+\;t \displaystyle\sum_{j=2}^{i} h_{i-j}(\x)\!
\displaystyle\sum_{k=0}^{\left\lfloor \frac{j}{2} \right\rfloor - 1}
s_{\,j-2k,\,2^{k}}(\x)\, t^{k}, & i \ge 1.
\end{dcases}
\]

Substituting this expression for $P_i(\x;t)$ into \eqref{lem-equi-equ}, as given in Lemma \ref{lem-equi}, yields
\begin{align*} 
Q_n(\x;t)&=F_{1}(\x)+F_{2}(\x;t),
\end{align*}
where
\begin{align*} 
F_{1}(\x):=\sum_{i=0}^{n}(-1)^{i}e_{n-i}[3X]h_{i}(\x)
\end{align*}
and 
\begin{align} \label{eq:thm-equi-thm-equi-eq2-2}
F_{2}(\x;t):=\sum_{i=2}^{n}(-1)^{i}e_{n-i}[3X]\cdot t\sum_{j=2}^{i}h_{i-j}(\x)\sum_{k=0}^{\lfloor\frac{j}{2}\rfloor-1}s_{j-2k,2^k}(\x)t^{k}.
\end{align}
We now simplify $F_1(\x)$ and $F_2(\x;t)$.
By Lemma~\ref{lem:hung-1}, we obtain
\begin{align}\label{eq:f_1}
F_{1}(\x)=\sum_{i=0}^{n}(-1)^i e_{n-i}[3X]h_i[X]=e_n[2X].
\end{align}
For $F_2(\x;t)$, interchanging the summation order over $i, j$, and $k$ in \eqref{eq:thm-equi-thm-equi-eq2-2} gives
\begin{align}\label{thm-equi-eq3}
F_{2}(\x;t)
=\sum_{k=0}^{\lfloor\frac{n}{2}\rfloor-1}\left(\sum_{j=2k+2}^{n}s_{j-2k,2^k}(\x)\sum_{i=j}^{n}(-1)^{i}e_{n-i}[3X]h_{i-j}(\x)
\right)t^{k+1}.
\end{align}
Let
$$
g(\x): = \sum_{i=j}^{n} (-1)^i e_{n-i}[3X]h_{i-j}(\x) ,
$$
which represents the innermost sum in \eqref{thm-equi-eq3}. We have
\begin{align*}
g(\x)
=\sum_{i=0}^{n-j}(-1)^{i+j}e_{n-i-j}[3X]h_{i}(\x) 
=(-1)^j\sum_{i=0}^{n-j}(-1)^i e_{n-i-j}[3X]h_{i}[X].
\end{align*}
By Lemma \ref{lem:hung-1}, we get
\begin{align*}
g(\x)=(-1)^j e_{n-j}[2X].
\end{align*}
Substituting this expression for $g(\x)$ back into \eqref{thm-equi-eq3}, we obtain 
\begin{align}\label{eq:f_2}
F_{2}(\x;t)&= \sum_{k=0}^{\lfloor\frac{n}{2}\rfloor-1}\left(\sum_{j=2k+2}^{n}(-1)^{j}s_{j-2k,2^k}(\x)e_{n-j}[2X]\right)t^{k+1}\nonumber\\
&=\sum_{k=1}^{\lfloor\frac{n}{2}\rfloor}\left(\sum_{j=0}^{n-2k}(-1)^{j}s_{(j+2,2^{k-1})}(\x)e_{n-2k-j}[2X]\right)t^{k}. 
\end{align}
Substituting the simplified expressions \eqref{eq:f_1} and \eqref{eq:f_2} for $F_{1}(\x)$ and $F_{2}(\x;t)$ into $Q_n(\x;t)$ yields 
\begin{align}\label{equi-pro-eq2}
Q_n(\x;t)= e_{n}[2X]+\sum_{k=1}^{\lfloor\frac{n}{2}\rfloor}\left(\sum_{j=0}^{n-2k}(-1)^{j}s_{(j+2,2^{k-1})}(\x)e_{n-2k-j}[2X]\right)t^{k}. 
\end{align}

It remains to show that
\eqref{equi-pro-eq1} and \eqref{equi-pro-eq2} are equivalent.
We verify this by comparing the coefficients of $t^k$
for each $0 \leq k \leq \lfloor n/2 \rfloor$.
For the constant term, recall that $s_{(1^n)}(\x) = e_n(\x)$.
Using Lemma~\ref{lem:hung-1}, the coefficient of $t^0$ in
\eqref{equi-pro-eq1} is
 $$\sum_{i=0}^{n} e_{i}(\x) s_{(1^{n-i})}(\x) = e_n[2X],$$
which agrees with the constant term in \eqref{equi-pro-eq2}.
For $1 \leq k \leq \lfloor \frac{n}{2} \rfloor $, 
it suffices to show that
\begin{align}\label{eq-sch-1}
 \sum_{i=0}^{n-2k} e_{i}(\x) s_{(2^{k}, 1^{n-2k-i})}(\x) 
 =\sum_{j=0}^{n-2k}(-1)^{j}s_{(j+2,2^{k-1})}(\x)e_{n-2k-j}[2X].
\end{align}
Applying Lemma~\ref{lem:hung-1} again, the right-hand side of \eqref{eq-sch-1} can be rewritten as
\begin{align*}
 &\sum_{j=0}^{n-2k} (-1)^{j} s_{(j+2,2^{k-1})}(\x) \sum_{i=0}^{n-2k-j} e_{i}(\x) e_{n-2k-j-i}(\x)
 \\
 &\qquad \qquad \qquad \qquad \qquad=\sum_{i=0}^{n-2k} e_{i}(\x) \sum_{j=0}^{n-2k-i}(-1)^{j} e_{n-2k-i-j}(\x) s_{(j+2,2^{k-1})}(\x).
\end{align*}
Hence, it remains to verify that for all $0 \leq i \leq n-2k$,
$$\sum_{j=0}^{n-2k-i}(-1)^{j} s_{(1^{n-2k-i-j})}(\x) s_{(j+2,2^{k-1})}(\x) = s_{(2^{k}, 1^{n-2k-i})}(\x).$$
Setting $m = n-2k-i$ in Lemma \ref{lemma-pieri-1} completes the proof.
\end{proof}

We are now ready to prove Theorem~\ref{thm-equi}.

\begin{proof}[Proof of Theorem \ref{thm-equi}]
By the Frobenius characteristic map,
it suffices to show that for $n \geq 0$,
\begin{align*}
 Q_n(\x;t)= \sum_{k=0}^{\lfloor \frac{n}{2} \rfloor }\left(\sum_{i=0}^{k} \sum_{j=k-i}^{\lfloor \frac{n-3i}{2} \rfloor} (n-3i-2j+1) s_{(3^i,2^j,1^{n-3i-2j})}(\x) \right) t^{k}.
\end{align*}
From \eqref{equi-pro-eq1}, this reduces to verifying that, for $0 \le k \le \lfloor n/2 \rfloor$,
$$\sum_{i=0}^{n-2k} e_{i}(\x) s_{(2^{k}, 1^{n-2k-i})}(\x) = \sum_{i=0}^{k} \sum_{j=k-i}^{\lfloor \frac{n-3i}{2} \rfloor} (n-3i-2j+1) s_{(3^i,2^j,1^{n-3i-2j})}(\x). $$
The claim then follows directly from Lemma~\ref{lem-pire-2}.
\end{proof}

\subsection{Proof of Theorem \ref{thm-ord}}

The aim of this subsection is to prove Theorem~\ref{thm-ord} 
by combining Proposition~\ref{equi-pro} with the classical hook-length formula. 

We begin by recalling some basic definitions and results. 
Let $\lambda=(\lambda_1,\lambda_2,\ldots,\lambda_{\ell(\lambda)})$ be a partition, where $\ell(\lambda)$ denotes the number of its nonzero parts.
The partition $\lambda$ can be represented by a left-justified array of cells, where the $i$-th row contains $\lambda_i$ cells. 
This array is called the Ferrers (or Young) diagram of $\lambda$.
Each cell in row $i$ and column $j$ is denoted by $(i,j)$, with rows numbered from top to bottom and columns from left to right. 
The hook-length of a cell $(i,j)$, denoted by $h(i,j)$, is the total number of cells directly to the right or directly 
below $(i,j)$, including the cell itself. 
For example, the hook-length of the cell $(1,2)$ 
in the Young diagram of $(4,4,2)$ is $5$, 
as illustrated in Figure~\ref{Young}.

\begin{figure}[h]
\centering
\begin{tikzpicture}[scale=0.6]
\foreach \x in {1,2,3,4} {
 \ifnum\x>1 
 \filldraw[fill=gray] (\x,0) rectangle (\x+1,1);
 \else
 \draw (\x,0) rectangle (\x+1,1);
 \fi
}

\foreach \x in {1,2,3,4} {
 \ifnum\x=2 
 \filldraw[fill=gray] (\x,-1) rectangle (\x+1,0);
 \else
 \draw (\x,-1) rectangle (\x+1,0);
 \fi
}

\foreach \x in {1,2} {
 \ifnum\x=2 
 \filldraw[fill=gray] (\x,-2) rectangle (\x+1,-1);
 \else
 \draw (\x,-2) rectangle (\x+1,-1);
 \fi
}
\end{tikzpicture}
\captionof{figure}{The hook set of $(1,2)$ composed of gray cells.}
\label{Young}

\end{figure}
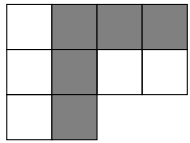

The classical hook-length formula, discovered by Frame, Robinson, 
and Thrall~\cite{frame1954CJM}, is stated below.

\begin{lemma}[\cite{frame1954CJM}]\label{lem:lem-sagan}
For any partition $\lambda \vdash n$, we have
\begin{align*}
f^{\lambda} = \frac{n!}{\displaystyle\prod_{(i,j)\in \lambda} h{(i,j)}},
\end{align*}
where $f^\lambda$ denotes the number 
of standard Young tableaux of shape $\lambda$. %
\end{lemma}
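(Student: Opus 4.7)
The plan is to prove the hook-length formula by induction on $n = |\lambda|$, following the probabilistic argument of Greene, Nijenhuis, and Wilf. Set $F(\lambda) := n!/\prod_{(i,j)\in\lambda} h(i,j)$ and note that $f^{\emptyset} = F(\emptyset) = 1$ trivially. In any standard Young tableau of shape $\lambda$ the entry $n$ occupies a corner cell $c = (a,b)$ (one with $(a+1,b), (a,b+1) \notin \lambda$), and deleting this entry produces an SYT of shape $\lambda \setminus \{c\}$; hence
$$f^\lambda = \sum_{c \text{ a corner of } \lambda} f^{\lambda \setminus \{c\}}.$$
By the inductive hypothesis the right-hand side equals $\sum_c F(\lambda \setminus \{c\})$, so the problem reduces to verifying the purely arithmetic identity
$$\sum_{c \text{ a corner of } \lambda} \frac{F(\lambda \setminus \{c\})}{F(\lambda)} = 1.$$

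Because removing the corner $(a,b)$ decreases by exactly one the hook length of each cell strictly above $(a,b)$ in column $b$ or strictly left of $(a,b)$ in row $a$, and leaves all other hooks unchanged, a direct calculation gives
$$\frac{F(\lambda \setminus \{c\})}{F(\lambda)} = \frac{1}{n} \prod_{i=1}^{a-1} \frac{h(i,b)}{h(i,b) - 1} \prod_{j=1}^{b-1} \frac{h(a,j)}{h(a,j) - 1}.$$
I would then interpret this quantity as a probability by introducing the \emph{hook walk}: pick an initial cell uniformly at random from the $n$ cells of $\lambda$; at each subsequent step, from a non-corner cell $(i,j)$ move to a cell chosen uniformly from the $h(i,j) - 1$ other cells in the hook of $(i,j)$ (the cells strictly below in column $j$ or strictly to the right in row $i$); halt when a corner is reached. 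Since each step strictly increases the sum of coordinates, the walk terminates after finitely many steps at some corner, so summing the hitting probabilities over all corners yields $1$; the target identity thus reduces to the claim that the walk ends at the corner $c = (a,b)$ with probability equal to the product formula displayed above.

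The main obstacle is establishing this hitting-probability identity. I would prove it by induction on $a + b$, conditioning on the first step of the walk and analysing the row and column projections of the trajectory separately, using the fact that each step preserves either the row or the column of the current cell. The induction reduces to a telescoping manipulation of the products $\prod_{i<a} h(i,b)/(h(i,b)-1)$ and $\prod_{j<b} h(a,j)/(h(a,j)-1)$; once this is in place, the recursion for $F$ coincides with the one for $f^\lambda$, completing the induction and the proof of the hook-length formula.
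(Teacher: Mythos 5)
The paper does not actually prove this lemma: it is the classical Frame--Robinson--Thrall hook-length formula, stated with a citation to \cite{frame1954CJM} and used as a black box to compute $\dim V_\lambda$. Your proposal therefore cannot match the paper's (nonexistent) argument; what you have written is the standard Greene--Nijenhuis--Wilf probabilistic proof, and it is correct in outline. The reduction $f^\lambda=\sum_c f^{\lambda\setminus\{c\}}$ is right, and your computation of the ratio $F(\lambda\setminus\{c\})/F(\lambda)=\tfrac{1}{n}\prod_{i<a}\tfrac{h(i,b)}{h(i,b)-1}\prod_{j<b}\tfrac{h(a,j)}{h(a,j)-1}$ correctly accounts for the fact that only the hooks in row $a$ and column $b$ change (each dropping by one) and that the removed corner itself contributes hook length $1$. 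The only place where your sketch is imprecise is the key hitting-probability lemma: for a fixed terminal corner $(a,b)$ the quantity $a+b$ is constant, so ``induction on $a+b$'' is not the right induction variable. The Greene--Nijenhuis--Wilf argument fixes the corner, considers the probability that a walk started at a given cell visits a prescribed set $A$ of rows and $B$ of columns before terminating at $(a,b)$, proves that this probability equals $\prod_{i\in A\setminus\{a\}}\tfrac{1}{h(i,b)-1}\prod_{j\in B\setminus\{b\}}\tfrac{1}{h(a,j)-1}$ by induction on $|A|+|B|$ (conditioning on the first step), and then sums over all starting cells and all trajectories by expanding $\prod_{i<a}\bigl(1+\tfrac{1}{h(i,b)-1}\bigr)\prod_{j<b}\bigl(1+\tfrac{1}{h(a,j)-1}\bigr)$. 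With the induction set up this way, your argument closes correctly; compared with the paper's bare citation, it buys a self-contained and elementary proof at the cost of about a page of bookkeeping.
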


We are now in a position to prove Theorem~\ref{thm-ord}.

\begin{proof}[Proof of Theorem \ref{thm-ord}.]
Taking the graded dimension of ${Q^{\mathfrak{S}_n}_{T_n}}(t)$ in Proposition~\ref{equi-pro}, we obtain
\begin{align}\label{thm-ord-1}
Q_{T_n}(t)
&=\sum_{k=0}^{\lfloor \frac{n}{2} \rfloor }
\sum_{i=0}^{n-2k} 
\dim \mathrm{Ind}_{\mathfrak{S}_{i} \times \mathfrak{S}_{n-i}}^{\mathfrak{S}_{n}}\left(V_{(1^i)} \otimes V_{(2^k,1^{n-2k-i})}\right) 
 t^{k}\nonumber\\
&=\sum_{k=0}^{\lfloor \frac{n}{2} \rfloor }
\sum_{i=0}^{n-2k} 
\lvert \mathfrak{S}_n : \mathfrak{S}_i \times \mathfrak{S}_{n-i} \rvert \times \dim V_{(1^i)} \times \dim V_{(2^k,1^{n-2k-i})} 
 t^{k},
\end{align}
where $\lvert \mathfrak{S}_n : \mathfrak{S}_i \times \mathfrak{S}_{n-i} \rvert$ denotes the index of $\mathfrak{S}_i \times \mathfrak{S}_{n-i}$ in $\mathfrak{S}_n$, 
which equals $\binom{n}{i}$. 
Recall that for any partition $\lambda$, the dimension of the irreducible 
representation $V_\lambda$ of $\mathfrak{S}_n$ coincides with $f^\lambda$. 
Applying Lemma~\ref{lem:lem-sagan} gives
\begin{align}\label{eq:hook-length-special}
\dim V_{(1^i)}=1
\qquad \text{and} 
\qquad 
\dim V_{(2^k,1^{n-2k-i})} = \frac{(n-2k-i+1)(n-i)!}{k!(n-k-i+1)!}.
\end{align}
Substituting \eqref{eq:hook-length-special} into \eqref{thm-ord-1} yields
\begin{align*}
 Q_{T_n}(t)&=
 \sum_{k=0}^{\lfloor \frac{n}{2} \rfloor }\sum_{i=0}^{n-2k} \binom{n}{i} \frac{(n-2k-i+1)(n-i)!}{k!(n-k-i+1)!} t^{k}\\
&=\sum_{k=0}^{\lfloor \frac{n}{2} \rfloor }\sum_{i=0}^{n-2k} \frac{(n-2k-i+1)}{n+1} \dbinom{n+1}{k,i,n-k-i+1} t^{k}.
\end{align*}
Applying the index substitution $i \mapsto i+2k$ yields 
the desired expression~\eqref{QTN}. Thus we complete the proof.
\end{proof}

Similarly, combining Theorem~\ref{thm-equi} with Lemma~\ref{lem:lem-sagan}, 
we obtain the following result, which gives an alternative expression for 
$Q_{T_n}(t)$, different from that in Theorem~\ref{thm-ord}.

\begin{theorem}
For any thagomizer matroid $T_{n}$ with $n \geq 0$, we have
\begin{align*}
 Q_{T_{n}}(t)= \sum_{k=0}^{\lfloor \frac{n}{2} \rfloor }\left(\sum_{i=0}^{k} \sum_{j=k-i}^{\lfloor \frac{n-3i}{2} \rfloor} \frac{(j+1)(n-3i-j+2)(n-3i-2j+1)^2 n!}{i!(i+j+1)!(n-2i-j+2)!} \right) t^{k}.
\end{align*}
\end{theorem}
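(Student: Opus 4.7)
The plan is to derive the identity from Theorem~\ref{thm-equi} by passing to graded dimensions and then invoking the hook-length formula. Since the graded dimension of $Q^{\mathfrak{S}_n}_{T_n}(t)$ recovers $Q_{T_n}(t)$ and $\dim V_\lambda = f^\lambda$, Theorem~\ref{thm-equi} immediately yields
\begin{equation*}
Q_{T_n}(t) = \sum_{k=0}^{\lfloor n/2 \rfloor}\Bigg(\sum_{i=0}^{k}\sum_{j=k-i}^{\lfloor (n-3i)/2\rfloor}(n-3i-2j+1)\,f^{(3^{i},2^{j},1^{n-3i-2j})}\Bigg) t^{k}.
\end{equation*}
Matching this against the claimed formula reduces, after writing $m := n-3i-2j$, to establishing the dimension identity
\begin{equation*}
f^{(3^{i},2^{j},1^{m})} = \frac{(j+1)(m+1)(j+m+2)\,n!}{i!\,(i+j+1)!\,(i+j+m+2)!},
\end{equation*}
because multiplication by the prefactor $(n-3i-2j+1) = m+1$ then produces the squared factor $(n-3i-2j+1)^{2}$ appearing in the theorem.

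To establish this dimension identity, I would apply Lemma~\ref{lem:lem-sagan} and compute $H(\lambda) := \prod_{(a,b)\in\lambda} h(a,b)$ column by column, exploiting that $\lambda = (3^i, 2^j, 1^m)$ has only three columns, of heights $i+j+m$, $i+j$, and $i$. Column~$3$ contributes $\prod_{a=1}^{i}(i-a+1) = i!$. In column~$2$, the hook lengths over rows $1 \le a \le i$ are $i+j-a+2$ and multiply to $(i+j+1)!/(j+1)!$, while those over $i < a \le i+j$ are $i+j-a+1$ and multiply to $j!$; the column total is therefore $(i+j+1)!/(j+1)$. Column~$1$ splits analogously into three row-ranges with hook lengths $i+j+m-a+3$, $i+j+m-a+2$, and $i+j+m-a+1$; the three sub-products are $(i+j+m+2)!/(j+m+2)!$, $(j+m+1)!/(m+1)!$, and $m!$, and these telescope to $(i+j+m+2)!/[(j+m+2)(m+1)]$. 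Multiplying the three column contributions gives
\begin{equation*}
H(\lambda) = \frac{i!\,(i+j+1)!\,(i+j+m+2)!}{(j+1)(m+1)(j+m+2)},
\end{equation*}
and the hook-length formula then yields the claimed expression for $f^{\lambda}$. Substituting back $m+1 = n-3i-2j+1$, $j+m+2 = n-3i-j+2$, and $i+j+m+2 = n-2i-j+2$, and multiplying by $(n-3i-2j+1)$, recovers the exact summand in the statement.

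The argument is essentially computational rather than structural; the only real risk is a bookkeeping slip in the column-$1$ telescoping, where three separate factorial ranges must be combined into a single clean ratio. Once that product is carried out carefully, nothing beyond routine algebra is required.
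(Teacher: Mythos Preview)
Your proposal is correct and follows essentially the same route as the paper: both take graded dimensions in Theorem~\ref{thm-equi}, invoke $\dim V_\lambda=f^\lambda$, and apply the hook-length formula (Lemma~\ref{lem:lem-sagan}) to the shape $(3^i,2^j,1^{n-3i-2j})$. The paper simply records the resulting dimension formula~\eqref{eq:dim-special-partition} without details, whereas you spell out the column-by-column hook product; your computation is accurate.
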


\begin{proof}
For any nonnegative integer $n$, taking the graded dimension of ${Q^{\mathfrak{S}_n}_{T_n}}(t)$ in Theorem~\ref{thm-equi}, we deduce that
\begin{align}\label{thm-ordnew-1}
 {Q_{T_n}}(t) = \sum_{k=0}^{\lfloor \frac{n}{2} \rfloor }\sum_{i=0}^{k} \sum_{j=k-i}^{\lfloor \frac{n-3i}{2} \rfloor} (n-3i-2j+1) \dim V_{(3^i,2^j,1^{n-3i-2j})} t^{k}.
\end{align}
Using the relation $\dim V_{\lambda} = f^\lambda$ together with 
Lemma~\ref{lem:lem-sagan}, we obtain
\begin{align}\label{eq:dim-special-partition}
\dim V_{(3^i,2^j,1^{n-3i-2j})}=\frac{(j+1)(n-3i-j+2)(n-3i-2j+1) n!}{i!(i+j+1)!(n-2i-j+2)!}.
\end{align}
Substituting \eqref{eq:dim-special-partition} into \eqref{thm-ordnew-1} leads to
\begin{align*}
 {Q_{T_n}}(t) &= \sum_{k=0}^{\lfloor \frac{n}{2} \rfloor }\sum_{i=0}^{k} \sum_{j=k-i}^{\lfloor \frac{n-3i}{2} \rfloor} (n-3i-2j+1) \frac{(j+1)(n-3i-j+2)(n-3i-2j+1) n!}{i!(i+j+1)!(n-2i-j+2)!} t^{k}\\
 &=\sum_{k=0}^{\lfloor \frac{n}{2} \rfloor }\sum_{i=0}^{k} \sum_{j=k-i}^{\lfloor \frac{n-3i}{2} \rfloor} \frac{(j+1)(n-3i-j+2)(n-3i-2j+1)^2 n!}{i!(i+j+1)!(n-2i-j+2)!} t^{k},
\end{align*}
which completes the proof.
\end{proof}

\section{Generating functions}\label{sec-inv}

In this section, we present an alternative proof of Theorem~\ref{thm-ord}.
Our approach is based on a relationship between the inverse Kazhdan--Lusztig polynomials and the Kazhdan--Lusztig polynomials of thagomizer matroids, together with ordinary generating functions.
Let 
\begin{align*}
\Phi(t,u):=\sum_{n=0}^{\infty}P_{T_n}(t)u^{n+1}
\qquad \text{and} 
\qquad
\Psi(t,u):=\sum_{n=0}^{\infty}Q_{T_n}(t)u^{n}.
\end{align*}
An explicit expression for $\Phi(t,u)$ was given by Gedeon~\cite{gedeon2017kazhdan}. 
We now derive a corresponding expression for $\Psi(t,u)$.

\begin{proposition} \label{prop-gene-4.1}
We have the following closed form,
\begin{align}\label{genQTN}
\Psi(t,u)=\frac{1-3u-\sqrt{(1-4t)u^{2}-2u+1}}{2u\left(tu+2u-1\right)}. 
\end{align}
\end{proposition}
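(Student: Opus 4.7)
The plan is to derive the closed form for $\Psi(t,u)$ by combining Lemma~\ref{lem-equi} with an ordinary-generating-function argument and Gedeon's explicit expression for $\Phi(t,u)$ from~\cite{gedeon2017kazhdan}.

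First I would take graded dimensions in~\eqref{lem-equi-equ}. Since $e_{n-i}[3X]$ corresponds to a representation of dimension $\sum_{a+b+c=n-i}\binom{n-i}{a,b,c}=3^{n-i}$, and $\dim\mathrm{Ind}_{\mathfrak{S}_i\times\mathfrak{S}_{n-i}}^{\mathfrak{S}_n}V=\binom{n}{i}\dim V$, the lemma collapses to the non-equivariant identity
\begin{equation*}
Q_{T_n}(t)=\sum_{i=0}^{n}(-1)^{i}\binom{n}{i}\,3^{n-i}\,P_{T_i}(t).
\end{equation*}

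Next I would multiply by $u^n$, sum over $n\ge 0$, and interchange the order of summation. Using the elementary identity $\sum_{n\ge i}\binom{n}{i}(3u)^{n-i}=(1-3u)^{-(i+1)}$, this becomes
\begin{equation*}
\Psi(t,u)=\frac{1}{1-3u}\sum_{i\ge 0}P_{T_i}(t)\left(\frac{-u}{1-3u}\right)^{i}.
\end{equation*}
Recognizing that $\Phi(t,z)=z\sum_{i\ge 0}P_{T_i}(t)z^{i}$ by definition, the substitution $z=-u/(1-3u)$ yields the concise identity
\begin{equation*}
\Psi(t,u)=-\frac{1}{u}\,\Phi\!\left(t,\frac{-u}{1-3u}\right).
\end{equation*}

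Finally I would substitute Gedeon's formula
$\Phi(t,w)=\bigl(1-\sqrt{1-4w+4(1-t)w^{2}}\bigr)\big/\bigl(2(1-(1-t)w)\bigr)$ at $w=-u/(1-3u)$ and simplify. A direct computation gives
\begin{equation*}
1-(1-t)w=\frac{1-2u-tu}{1-3u},\qquad 1-4w+4(1-t)w^{2}=\frac{(1-4t)u^{2}-2u+1}{(1-3u)^{2}},
\end{equation*}
the second equality reducing to the polynomial identity $(1-3u)^{2}+4u(1-3u)+4(1-t)u^{2}=(1-4t)u^{2}-2u+1$. Hence, on the branch $1-3u>0$, the square root becomes $R/(1-3u)$ with $R=\sqrt{(1-4t)u^{2}-2u+1}$, and the factors $(1-3u)^{-1}$ cancel in $\Phi(t,w)$ to give $\Phi(t,-u/(1-3u))=\bigl((1-3u)-R\bigr)\big/\bigl(2(1-2u-tu)\bigr)$. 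Multiplying by $-1/u$ and using $1-2u-tu=-(tu+2u-1)$ produces exactly~\eqref{genQTN}. The main technical obstacle is this square-root simplification; once established, the remaining manipulations are routine OGF bookkeeping.
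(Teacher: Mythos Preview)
Your argument is correct and reaches the same intermediate identity $\Psi(t,u)=-\tfrac{1}{u}\,\Phi\!\bigl(t,\tfrac{u}{3u-1}\bigr)$ as the paper (note $\tfrac{-u}{1-3u}=\tfrac{u}{3u-1}$), after which both you and the paper substitute Gedeon's closed form and simplify identically.

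The route to that identity is genuinely different, however. The paper works entirely non-equivariantly: it applies the $P$--$Q$ orthogonality $\sum_F(-1)^{\rk(\M|_F)}P_{\M|_F}Q_{\M/F}=0$ directly to $T_n$, obtaining the two-sided convolution $\sum_i\binom{n}{i}(-2)^{n-i}Q_{T_i}(t)=\sum_i\binom{n}{i}(-1)^{i}P_{T_i}(t)$, and then must invert an extra substitution (it first lands at $\Psi(t,\tfrac{u}{1+2u})=-\tfrac{1+2u}{u}\Phi(t,\tfrac{u}{u-1})$ before changing variables). You instead take graded dimensions in Lemma~\ref{lem-equi} to obtain the \emph{one-sided} formula $Q_{T_n}(t)=\sum_i(-1)^i\binom{n}{i}3^{\,n-i}P_{T_i}(t)$, which passes to generating functions in a single step. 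Your path is shorter and avoids the intermediate change of variables, but it leans on the equivariant machinery of Section~\ref{sec-equ-inv}; the paper's proof is designed precisely to be an independent, purely non-equivariant derivation, so it trades brevity for self-containment.
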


\begin{proof}
By \cite[Theorem 1.3]{gao2021inverse}, for every matroid $\M$, we have 
\begin{align}\label{eq:thg_gene_1}
 \sum_{ F \in \mathcal{L}(\M)} (-1)^{\rk( \M|_F)}P_{ \M|_F}(t) \, Q_{\M/{F}}(t)=0.
\end{align}
Applying \eqref{eq:thg_gene_1} to the thagomizer matroid $T_n$ yields 
\begin{align}\label{eq:thg_gene_21}
\sum_{i=0}^{n}\binom{n}{i}2^{i} (-1)^{\rk( B_{i})}P_{B_i}(t) \,Q_{T_{n-i}}(t)
+\sum_{i=0}^{n}\binom{n}{i}(-1)^{\rk (T_{i})}P_{T_i}(t)\, Q_{B_{n-i}}(t)=0. 
\end{align}
By \cite[Corollary 2.10]{elias2016kazhdan} and \cite[Corollary 3.2]{gao2021inverse}, we have
\begin{align}\label{boolean-kl-ikl}
P_{B_n}(t)=Q_{B_n}(t)=1 \qquad \text{for all} ~ n \geq 0.
\end{align}
Substituting \eqref{boolean-kl-ikl} into \eqref{eq:thg_gene_21} yields
\begin{align}\label{eq:thg_gene_3}
\sum_{i=0}^{n}\binom{n}{i}(-2)^{i}Q_{T_{n-i}}(t)
+\sum_{i=0}^{n}\binom{n}{i}(-1)^{i+1}P_{T_i}(t)=0.
\end{align} 
Equivalently, \eqref{eq:thg_gene_3} can be written as
\begin{align} \label{lem-ord-eq}
 \sum_{i=0}^{n}\binom{n}{i}(-2)^{n-i}Q_{T_{i}}(t)=\sum_{i=0}^{n}\binom{n}{i}(-1)^{i}P_{T_i}(t).
\end{align}

We now multiply both sides of \eqref{lem-ord-eq} by $u^{n+1}$ and sum over $n \geq 0$. Let $L(t,u)$ and $R(t,u)$ denote the left-hand and right-hand sides of the resulting identity, respectively. For the left-hand side, we have
\begin{align*}
L(t,u)=\sum_{n=0}^{\infty}\sum_{i=0}^{n}\binom{n}{i}(-2)^{n-i}Q_{T_i}(t)u^{n+1} 
=\sum_{i=0}^{\infty}Q_{T_i}(t)u^{i+1}\sum_{n=i}^{\infty}\binom{n}{i}(-2u)^{n-i}. 
\end{align*}
Setting $m=n-i$ gives 
\begin{align*}
L(t,u)=\sum_{i=0}^{\infty}Q_{T_i}(t)u^{i+1}\sum_{m=0}^{\infty}\binom{m+i}{i}(-2u)^{m}.
\end{align*}
By the standard identity
\begin{align}\label{geniden}
\sum_{\ell=0}^{\infty}\binom{\ell+r}{r}x^{\ell}=\frac{1}{(1-x)^{r+1}}
 \qquad \text{for all} ~ |x| < 1,
\end{align}
with $(\ell,r,x) = (m,i,-2u)$, we obtain
\begin{align*}
\sum_{m=0}^{\infty}\binom{m+i}{i}(-2u)^{m}
=\frac{1}{(1+2u)^{i+1}}.
\end{align*}
Thus
\begin{align}\label{L(t,u)}
L(t,u)=\sum_{i=0}^{\infty}Q_{T_i}(t)\left(\frac{u}{1+2u}\right)^{i+1}=\frac{u}{1+2u}\Psi(t,\frac{u}{1+2u}).
\end{align}
For the right-hand side, similarly,
\begin{align*}
R(t,u)=\sum_{n=0}^{\infty}\sum_{i=0}^{n}\binom{n}{i}(-1)^{i}P_{T_i}(t)u^{n+1}=\sum_{i=0}^{\infty}(-1)^{i}P_{T_i}(t)u^{i+1}\sum_{n=i}^{\infty}\binom{n}{i}u^{n-i}.
\end{align*}
Setting $m = n-i$ and using \eqref{geniden} again,
\begin{align*}
 R(t,u)=\sum_{i=0}^{\infty}(-1)^iP_{T_i}(t)u^{i+1}\sum_{m=0}^{\infty}\binom{m+i}{i}u^{m}
 =-\sum_{i=0}^{\infty}P_{T_i}(t)\left(\frac{u}{u-1}\right)^{i+1}.
\end{align*}
This simplifies to
\begin{align}\label{R(t,u)}
 R(t,u)=-\Phi(t,\frac{u}{u-1}).
\end{align}
From \eqref{L(t,u)} and \eqref{R(t,u)}, we have
\begin{align*}
 \Psi(t,\frac{u}{1+2u})=-\frac{1+2u}{u} \Phi(t,\frac{u}{u-1}),
\end{align*}
which can be rewritten as
\begin{align}\label{genrepq}
 \Psi(t,u)=-\frac{1}{u}\Phi(t,\frac{u}{3u-1}).
\end{align}
Recall that by \cite[Theorem 1]{gedeon2017kazhdan}, 
\begin{align}\label{genrep}
 \Phi(t,u)=\frac{1-\sqrt{1-4u(1-u+tu)}}{2(1-u+tu)}.
\end{align}
Substituting \eqref{genrep} into \eqref{genrepq} and considering sufficiently small $u$, we obtain \eqref{genQTN}. This completes the proof.
\end{proof}

From the explicit expression for $\Psi(t,u)$, we deduce the following recurrence for $Q_{T_n}(t)$.

\begin{proposition}\label{prop-recu-th-inv}
The sequence $\{Q_{T_n}(t)\}_{n=0}^{\infty}$ satisfies the recurrence relation 
\begin{align*}
 &(n+1)(t+2)(4t-1)Q_{T_{n}}(t)-(2nt-5n-t-11)Q_{T_{n+1}}(t)\\
 &\qquad \qquad \qquad \qquad -(nt+4n+4t+13)Q_{T_{n+2}}(t)+(n+4)Q_{T_{n+3}}(t)=0 \text{~~~for~all~} n\geq 0,
\end{align*}
with initial conditions $Q_{T_{0}}(t)=1$, $Q_{T_{1}}(t)=2$, and $Q_{T_{2}}(t)=t+4$.
\end{proposition}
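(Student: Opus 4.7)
The plan is to exploit the algebraic closed form for $\Psi(t,u)$ established in Proposition~\ref{prop-gene-4.1} and convert it into a linear ODE in $u$, whose coefficient-wise translation yields the stated recurrence.

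First I would rewrite~\eqref{genQTN} in the form
$$2u(tu+2u-1)\,\Psi(t,u) + (3u-1) = -\sqrt{(1-4t)u^{2}-2u+1},$$
and square both sides to obtain a polynomial relation
$$F(t,u,\Psi) := \bigl(2u(tu+2u-1)\,\Psi + 3u - 1\bigr)^{2} - \bigl((1-4t)u^{2}-2u+1\bigr) = 0,$$
which is quadratic in $\Psi$ with coefficients polynomial in $u$ and $t$.

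Next, since $\Psi$ is algebraic of degree at most two over $\mathbb{C}(t,u)$, the three functions $1$, $\Psi$, $\partial_u \Psi$ must be linearly dependent over $\mathbb{C}(t,u)$. Concretely, differentiating $F=0$ with respect to $u$ and using $F=0$ itself to eliminate the surd produces a first-order linear ODE of the form
$$A(t,u)\,\partial_{u}\Psi(t,u) + B(t,u)\,\Psi(t,u) = C(t,u),$$
where $A$, $B$, $C$ are polynomials in $u$ with coefficients in $\mathbb{Z}[t]$. Extracting the coefficient of $u^{n+2}$ on both sides, using $[u^{m}]\Psi = Q_{T_{m}}(t)$ and $[u^{m-1}]\partial_u\Psi = m\,Q_{T_{m}}(t)$, converts this ODE into a linear recurrence for the sequence $\{Q_{T_n}(t)\}$ whose coefficients are linear in $n$ and polynomial in $t$. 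A direct bookkeeping of which monomial $u^{k}$ in $A(t,u)$ and $B(t,u)$ contributes to each index $Q_{T_{n+i}}(t)$ shows that the resulting recurrence has precisely the four-term shape asserted in the proposition.

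The main obstacle is purely algebraic: carrying out the elimination carefully so that the resulting ODE appears in its minimal normalized form, and then matching its coefficients against the claimed polynomials $(n+1)(t+2)(4t-1)$, $-(2nt-5n-t-11)$, $-(nt+4n+4t+13)$, and $(n+4)$. These manipulations present no conceptual difficulty and can be confirmed by computer algebra. It remains only to check the three initial conditions $Q_{T_0}(t)=1$, $Q_{T_1}(t)=2$, and $Q_{T_2}(t)=t+4$, which follow immediately by reading off the coefficients of $u^{0}$, $u^{1}$, and $u^{2}$ in~\eqref{genQTN} (equivalently, from the explicit formula in Theorem~\ref{thm-ord}); together with the recurrence these determine the sequence $\{Q_{T_n}(t)\}$ uniquely.
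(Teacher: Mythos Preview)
Your proposal is correct and follows essentially the same route as the paper: derive a first-order linear ODE in $u$ for $\Psi(t,u)$ from its algebraic closed form~\eqref{genQTN}, then extract coefficients to obtain the recurrence and verify the initial conditions by expanding $\Psi$ at $u=0$. The only difference is cosmetic: the paper simply asserts the explicit ODE~\eqref{eq:pde-1} and checks it (by computer algebra), whereas you outline the standard elimination procedure (square, differentiate, eliminate the surd) that produces it.
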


\begin{proof}
By Proposition~\ref{prop-gene-4.1}, the generating function $\Psi(t,u)$ of the sequence $\{Q_{T_n}(t)\}$ has the closed form \eqref{genQTN}. 
The initial conditions follow from the expansion at $u=0$ (for instance, in Mathematica ):
\begin{mma}
\In \Psi[t\_,u\_]:=\frac{1-3u-\sqrt{(1-4t)u^2-2u+1}}{2u(tu+2u-1)};\\
\end{mma}
\begin{mma}
\In |CoefficientList|[|Series|[\Psi[t,u],\{u,0,2\}],u]\\
\end{mma}
\begin{mma}
\Out \{1,2,4+t\}\\
\end{mma}
Furthermore, one checks that $\Psi(t,u)$ satisfies the following first-order linear differential equation:

\begin{align}\label{eq:pde-1}
 & \big((4t^2+7t-2)u^3+(3t+6)u^2-(2t+5)u+1\big)\Psi(t,u)+(2t+1)u-1\nonumber
 \\
 & \qquad \qquad \qquad \qquad +u\big((4t^2+7t-2)u^3-(2t-5)u^2-(t+4)u+1\big) \Psi_{u}(t,u) =0.
\end{align}
Substituting the power-series expansion $\Psi(t,u)=\sum_{n=0}^{\infty} Q_{T_n}(t)\,u^{n}$ into \eqref{eq:pde-1} yields the following functional equation for the sequence $\{Q_{T_n}(t)\}_{n\ge 0}$:
\begin{align}\label{Q[n]rec-1}
 & \big((4t^2+7t-2)u^3+(3t+6)u^2-(2t+5)u+1\big)\sum_{n=0}^{\infty}Q_{T_n}(t)u^{n}+(2t+1)u-1\nonumber\\
& \qquad \qquad \qquad +\big((4t^2+7t-2)u^3-(2t-5)u^2-(t+4)u+1\big) \sum_{n=1}^{\infty}nQ_{T_n}(t)u^{n} =0. 
\end{align}
For $n \ge 0$, extracting the coefficient of $u^{n+3}$ from \eqref{Q[n]rec-1} yields precisely the recurrence relation stated above.
Hence $\{Q_{T_n}(t)\}$ satisfies the stated recurrence with the given initial conditions, which completes the proof.

\end{proof}

We now provide an alternative proof of Theorem~\ref{thm-ord}.

\begin{proof}[Alternative proof of Theorem \ref{thm-ord}.]
Fix $n\geq 0$. Define
$f_n(t):=\sum_{k=0}^{\infty}a_{n,k}t^k,$
where 
\begin{align*} 
a_{n,k}=
\sum_{i=2k}^{n}\frac{n-i+1}{n+1} \dbinom{n+1}{k,i-2k,n+k-i+1}
\end{align*}
for $0\leq k \leq \lfloor \frac{n}{2} \rfloor$, and set $a_{n,k}=0$ otherwise. For completeness, we also define $a_{n,k}=0$ whenever 
$k<0$ or $n<2k$. 
Let $c_{n,k}$ denote the coefficient of $t^k$ in $Q_{T_n}(t)=\sum_{k=0}^{\infty}c_{n,k}t^k$, with the same convention $c_{n,k}=0$ for $k<0$ or $n<2k$. 
To prove $Q_{T_n}(t)=f_n(t)$, it suffices to show that the arrays $\{c_{n,k}\}$ and $\{a_{n,k}\}$ satisfy the same recurrence and the same initial conditions.
Direct computation for 
$n=0,1,2$ confirms that $f_n(t)=Q_{T_n}(t)$ in these cases.

From Proposition~\ref{prop-recu-th-inv}, comparing coefficients of 
$t^k$ yields the recurrence
\begin{align}\label{c[n,k]rec}
 (n+4)c_{n+3,k}
 =&
 (4n+13)c_{n+2,k}
 +(n+4)c_{n+2,k-1}-(5n+11)c_{n+1,k}+(2n-1)c_{n+1,k-1}\nonumber\\
&\quad+2(n+1)c_{n,k}-7(n+1)c_{n,k-1}-4(n+1)c_{n,k-2}.
\end{align}
It remains to verify that $a_{n,k}$ satisfies the same relation. For this purpose, we employ the Mathematica package 
\texttt{HolonomicFunctions} \cite{koutschan2009advanced}. The command 
\texttt{Annihilator[}\textit{expr}\texttt{]} computes annihilating 
operators for a given summand \textit{expr}. Applying it to $a_{n,k}$ gives

\begin{mma}
\In <<|RISC| ~\grave{} |HolonomicFunctions|~\grave{};\\
\end{mma}

\begin{mma}
\In |ann|=|Annihilator|[|Sum|[\frac{n-i+1}{n+1}|Multinomial|[k,i-2k,n+k-i+1], \{i,2k,n\}],\linebreak
\{S[k],S[n]\}] \\
\end{mma}

\begin{mma}
\Out \{(n+1) \left(11 k^2-k (13 n+18)+4 (n+1) (n+2)\right)-2 k (k+1) (n+1)
 S_k+(k-n-2) (2 k-n-1) (3 k-2 (n+1)) S_n,(2 k-n-2) (k-n-3) S_n^2+(n+2) (5
 k-3 n-7) S_n+2 (n+1) (n+2)\}\\
\end{mma}
\noindent Here $S_n$ and $S_k$ denote the forward shift operators in $n$ and $k$, respectively.
We rewrite the recurrence~\eqref{c[n,k]rec} by shifting 
$k \mapsto k-2$, obtaining
\begin{align*}
 (n+4)c_{n+3,k+2}-(4n+13)c_{n+2,k+2}
 -(n+4)c_{n+2,k+1}+(5n+11)c_{n+1,k+2}
 -(2n-1)c_{n+1,k+1}\\
 -2(n+1)c_{n,k+2}+7(n+1)c_{n,k+1}+4(n+1)c_{n,k}=0.
\end{align*}
In operator form, this corresponds to the Ore polynomial
\begin{align*}
 & (n+4) S[k]^2 S[n]^3-(4 n+13) S[k]^2 S[n]^2-(n+4) S[k] S[n]^2+(5 n+11) S[k]^2
 S[n]\\
 & \qquad \qquad\qquad
 -(2 n-1) S[k] S[n]-2 (n+1) S[k]^2+7 (n+1) S[k]+4 (n+1).
\end{align*}
We verify that this operator reduces to $0$ modulo the 
annihilating ideal \texttt{ann},
which is accomplished by 

{\renewcommand\baselinestretch{1.3}
\begin{mma}
\In |OreReduce|[(n+4) S[k]^2 S[n]^3-(4 n+13) S[k]^2 S[n]^2-(n+4) S[k] S[n]^2+(5 n+11) S[k]^2 S[n]-(2 n-1) S[k] S[n]-2 (n+1) S[k]^2+7 (n+1) S[k]+4 (n+1),|ann|]\\
\end{mma}
}

\begin{mma}
\Out 0 \\
\end{mma}

This confirms that $a_{n,k}$ satisfies the recurrence~\eqref{c[n,k]rec}, 
and the proof is complete.
\end{proof}

Having established an explicit expression for $Q_{T_n}(t)$, we now apply the recent deletion formula for inverse Kazhdan--Lusztig polynomials due to Braden, Ferroni, Matherne, and Nepal~\cite{braden2025deletion}.
Combining this formula with our expression for $Q_{T_n}(t)$ yields the following corollary.

\begin{corollary}
Let $Q_{K_{2,n}}(t)$ denote the inverse
Kazhdan--Lusztig polynomial of the graphic
matroid of the complete bipartite graph $K_{2,n}$
for any $n\ge1$. Then
\begin{align}\label{QK2n}
 Q_{K_{2,n}}(t)=
\sum_{k=0}^{\lfloor\frac{n}{2}\rfloor} \sum_{i=2k}^{n} \frac{n-i+1}{n+1} \dbinom{n+1}{k,i-2k,n+k-i+1} t^{k}+(n-1)t-1.
\end{align}
\end{corollary}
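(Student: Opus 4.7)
The plan is to apply the deletion formula for inverse Kazhdan--Lusztig polynomials of Braden, Ferroni, Matherne, and Nepal~\cite{braden2025deletion} to the distinguished edge $e = AB$ of the thagomizer matroid $T_n$. The key observation is that $K_{1,1,n}$ differs from $K_{2,n}$ only in the edge $AB$, so the graphic matroid $M(K_{2,n})$ coincides with the deletion $T_n \setminus e$. Consequently, the corollary reduces to establishing
$$
Q_{T_n \setminus e}(t) - Q_{T_n}(t) = (n-1)t - 1,
$$
with the first summand on the right-hand side of \eqref{QK2n} being exactly $Q_{T_n}(t)$ by Theorem~\ref{thm-ord}.

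Before applying the formula, I would identify the relevant minors of $T_n$ at $e$. Contracting $AB$ corresponds to identifying the two distinguished vertices of $K_{1,1,n}$, turning each spike into a parallel pair; hence the simplification of $T_n/e$ is the Boolean matroid $B_n$, which gives $Q_{T_n/e}(t) = 1$ by \eqref{boolean-kl-ikl}. Moreover, $e$ is neither a loop nor a coloop of $T_n$, so the deletion formula applies in its generic form, and the correction to $Q_{T_n}(t)$ is a finite sum over flats of $T_n$ involving inverse Kazhdan--Lusztig polynomials and characteristic polynomials of minors of $T_n$.

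Next, I would substitute the known data. Using the classification of flats of $T_n$ recalled at the beginning of Section~\ref{sec-equ-inv} (type~(1) flats, which do not contain $e$ and whose restriction is Boolean, and type~(2) flats, which contain $e$ and whose restriction is a smaller thagomizer), together with $Q_{B_k}(t) = 1$ and the explicit expression for $Q_{T_k}(t)$ from Theorem~\ref{thm-ord}, the contributions to the correction term split into two families indexed by these two types of flats. The degree bound $\deg Q_\M(t) < \tfrac12 \rk(\M)$ and the characteristic-polynomial factors in the deletion formula force all but the lowest-degree terms in $t$ to cancel.

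The main technical obstacle will be the bookkeeping of this cancellation: one must enumerate the nonzero strata in the deletion formula and match them against the closed-form expression from Theorem~\ref{thm-ord}. Because the minors involved are only Boolean matroids and thagomizers of smaller rank — all of whose inverse Kazhdan--Lusztig polynomials are already known — the simplification is algebraic and yields a polynomial whose only nonzero coefficients are in degrees $0$ and $1$. Verifying that these coefficients equal $-1$ and $n-1$ respectively then completes the proof.
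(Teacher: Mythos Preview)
Your overall strategy---apply the Braden--Ferroni--Matherne--Nepal deletion formula to the edge $e=AB$ of $T_n$---is exactly the paper's approach, but your description of that formula is inaccurate and causes you to overestimate the work. The formula \cite[Theorem~1.4]{braden2025deletion} is
\[
Q_{\M}(t) \;=\; Q_{\M\setminus e}(t) + (1+t)\,Q_{\M/e}(t) - \sum_{F\in\mathcal{T}_e}\tau(\M|_F/e)\,t^{\rk(F)/2}\,Q_{\M/F}(t),
\]
where $\mathcal{T}_e=\{F\in\mathcal{L}(\M):e\in F,\ F\setminus\{e\}\notin\mathcal{L}(\M)\}$ and $\tau(\mathrm{N})$ is the coefficient of $t^{(\rk(\mathrm{N})-1)/2}$ in $P_{\mathrm{N}}(t)$ (and $0$ when $\rk(\mathrm{N})$ is even). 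No characteristic polynomials appear, and type~(1) flats are irrelevant from the outset since they do not contain $e$.

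For $\M=T_n$, every type~(2) flat $F_S$ with $|S|\ge 1$ lies in $\mathcal{T}_{AB}$, but $T_n|_{F_S}/AB$ simplifies to $B_{|S|}$, whose Kazhdan--Lusztig polynomial is $1$; hence $\tau(B_{|S|})=0$ unless $|S|=1$. Only the $n$ triangle flats survive, each contributing $1\cdot t^{1}\cdot Q_{B_{n-1}}(t)=t$. The formula therefore collapses in one line to
\[
Q_{T_n}(t)=Q_{K_{2,n}}(t)+(1+t)-nt,
\]
giving $Q_{K_{2,n}}(t)=Q_{T_n}(t)+(n-1)t-1$ directly. The ``bookkeeping of cancellation'' between two families of flats and the degree-bound argument you anticipate are not needed.
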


\begin{proof}
By \cite[Theorem~1.4]{braden2025deletion}, for any element $i\in E$ that is not a coloop, one has
\begin{align}\label{dele-formula}
 Q_{\M}(t) = Q_{\M\smallsetminus i}(t) + (1+t) Q_{\M/i}(t) - \sum_{F\in \mathcal{T}_i} \tau(\M|_F/i)\, t^{\rk(F)/2}\, Q_{\M/F}(t),
\end{align}
where the set of flats is 
\begin{align*}
 \mathcal{T}_i = \mathcal{T}_i(\M) := \left\{ F \in \mathcal{L}(\M): i\in F \text{ and } F\smallsetminus\{i\}\notin \mathcal{L}(\M)\right\}.
\end{align*}
and
\begin{align*}
 \tau(\M) := 
 \begin{cases}
 [t^{(\rk(\M)-1)/2}]\, P_{\M}(t), & \text{if $\rk(\M)$ is odd},\\[6pt]
 0, & \text{if $\rk(\M)$ is even}.
 \end{cases}
\end{align*}
Applying \eqref{dele-formula} to the thagomizer matroid $T_n$ with the edge $AB$ chosen for deletion, we observe that
\begin{align*}
T_n \setminus AB = K_{2,n} \quad \text{and} \quad T_n / AB = B_n.
\end{align*}
The set $\mathcal{T}_i$ in this case consists of the $n$ distinct $3$-cycle flats.
Moreover, for each such flat $T_1$, we have
\begin{align*}
T_n|_{T_1}/AB \cong B_1 \quad \text{and} \quad T_n / T_1 \cong B_{n-1}.
\end{align*}
From \eqref{boolean-kl-ikl},
we obtain
\begin{align}\label{connec-tn-k2n}
Q_{K_{2,n}}(t)= Q_{T_n}(t)+(n-1)t-1.
\end{align}
Combining \eqref{connec-tn-k2n} with Theorem~\ref{thm-ord} completes the proof.
\end{proof}

\section{Log-concavity}\label{sec-log}

The objective of this section is to prove Theorem \ref{thm-log}, which establishes the log-concavity of inverse Kazhdan--Lusztig polynomials $Q_{T_{n}}(t)$ associated with thagomizer matroids.
Our proof follows the approach of Wu and Zhang~\cite{wu2023log}, who established the log-concavity of the Kazhdan--Lusztig polynomials $P_{T_n}(t)$ 
for all thagomizer matroids $T_n$, 
and is based on the following key idea.

Recall that
\begin{align}\label{cn,k}
c_{n,k}&=\sum_{i=2k}^{n} \frac{n-i+1}{n+1} \dbinom{n+1}{k,i-2k,n+k-i+1}.
\end{align}
Define
\begin{align} \label{eq:d_n,k}
d_{n,k}:=\sum_{i=2k}^{n}(n-i+1)\binom{n-k+1}{i-2k} 
\qquad 
\text{and} 
\qquad
b_{n,k}:=\frac{1}{n+1}\binom{n+1}{k},
\end{align}
so that $c_{n,k}=b_{n,k}d_{n,k}.$
To establish the log-concavity of the sequence $\{c_{n,k}\}_{k}$, it suffices to prove that both $\{d_{n,k}\}_k$ and $\{b_{n,k}\}_k$ are log-concave. The log-concavity of $\{b_{n,k}\}_k$ follows directly from standard properties of binomial coefficients. 
The main technical step is to prove the log-concavity of $\{d_{n,k}\}_k$, which is established in the following lemma.

\begin{lemma}\label{lem-dnk}
For any positive integers $n$ and $k$ with 
$n\geq 2k+2$, 
we have $$d^{2}_{n,k} \geq d_{n,k+1}d_{n,k-1}.$$
\end{lemma}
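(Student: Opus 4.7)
My plan is to prove Lemma~\ref{lem-dnk} by induction on $n$, adapting the strategy used by Wu and Zhang~\cite{wu2023log} for the Kazhdan--Lusztig polynomials $P_{T_n}(t)$. As the introduction outlines, the proof splits naturally into three stages: derive recurrences for $d_{n,k}$, establish a lower bound on the ratio $d_{n,k}/d_{n-1,k}$, and combine the two.

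For Stage 1, I would apply Pascal's identity $\binom{n-k+1}{j} = \binom{n-k}{j} + \binom{n-k}{j-1}$ inside the defining sum and regroup. After splitting off a copy of $d_{n-1,k}$, this yields $d_{n,k} = 2\,d_{n-1,k} + \sum_{j=0}^{n-2k}\binom{n-k}{j}$, and a second pass through the same identity converts the partial binomial sum into the clean three-term recurrence
\[ d_{n,k} = 4\,d_{n-1,k} - 4\,d_{n-2,k} + \binom{n-k-1}{k-1}. \]
A parallel calculation that matches $d_{n,k}$ and $d_{n,k+1}$ through a shared summation produces the further identity
\[ d_{n,k} + d_{n,k+1} = 3\,d_{n-1,k} + \binom{n-k}{k}. \]
For Stage 2, the three-term recurrence gives $d_{n,k}/d_{n-1,k} = 4 - 4\,d_{n-2,k}/d_{n-1,k} + \binom{n-k-1}{k-1}/d_{n-1,k}$, so the inductive hypothesis $d_{n-1,k}/d_{n-2,k} \geq 2$ together with non-negativity of the correction term gives $d_{n,k}/d_{n-1,k} \geq 2$; the base cases $d_{2k,k}=1$ and $d_{2k+1,k}=k+4$ start the induction.

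For Stage 3, set $T_j := 3\,d_{n-1,j} + \binom{n-j}{j}$, so that the second recurrence reads $d_{n,j+1} = T_j - d_{n,j}$. Substituting this and its shift $d_{n,k-1} = T_{k-1} - d_{n,k}$ into the target inequality $d_{n,k}^2 \geq d_{n,k-1}\,d_{n,k+1}$ rewrites it in the equivalent crossed form
\[ d_{n,k}\,(T_{k-1} + T_k) \;\geq\; T_{k-1}\,T_k. \]
I would then expand both sides in terms of $d_{n-1,k-1}$, $d_{n-1,k}$ and the binomial correction terms $\binom{n-k}{k}, \binom{n-k+1}{k-1}$, apply the ratio bound $d_{n,k} \geq 2\,d_{n-1,k}$ from Stage 2, and control the remaining cross term $d_{n-1,k-1}\,d_{n-1,k}$ by appealing to $d_{n-1,k-1} + d_{n-1,k} = 3\,d_{n-2,k-1} + \binom{n-k}{k-1}$ (an instance of the second recurrence at level $n-1$) together with the inductive log-concavity of $\{d_{n-1,j}\}_j$.

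The hardest step will be Stage 3. The naive use of the $\geq 2$ ratio bound alone only accounts for $6\,d_{n-1,k-1}d_{n-1,k}$ on the left of the crossed inequality, while the right-hand side contributes a leading $9\,d_{n-1,k-1}d_{n-1,k}$; closing this gap will require combining all three recurrences with the inductive log-concavity at level $n-1$, and a small number of boundary cases (in particular $n$ close to $2k+2$, where $d_{n,k+1}=1$) will likely need to be checked by direct computation before the induction can propagate.
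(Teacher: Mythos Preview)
Your Stage~1 recurrences and your Stage~2 bound $d_{n,k}/d_{n-1,k}\ge 2$ are correct, but Stage~3 has a genuine gap that you yourself identify and do not close. After the substitution $d_{n,k\pm1}=T_{k\mp?}-d_{n,k}$ the target inequality becomes $d_{n,k}(T_{k-1}+T_k)\ge T_{k-1}T_k$, and the leading contribution on the right is $9\,d_{n-1,k-1}d_{n-1,k}$ while the bound $d_{n,k}\ge 2d_{n-1,k}$ only gives $6\,d_{n-1,k-1}d_{n-1,k}$ on the left. Your proposal to absorb the deficit using ``inductive log-concavity at level $n-1$'' is not a plan: even granting $d_{n-1,k}^2\ge d_{n-1,k-1}d_{n-1,k+1}$, you would need something like $2d_{n-1,k}\ge d_{n-1,k-1}$ to make the difference $6d_{n-1,k}^2-3d_{n-1,k-1}d_{n-1,k}$ useful, and that is simply false (e.g.\ $d_{5,1}=49$, $d_{5,2}=6$). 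There is no indication of which combination of the recurrences would compensate, and in fact no constant ratio bound $d_{n,k}/d_{n-1,k}\ge c$ with $c$ independent of $(n,k)$ will work here.

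The paper resolves this by the Kauers--Paule device: instead of the crude bound $2$, it first expresses $d_{n-1,k}^2-d_{n-1,k-1}d_{n-1,k+1}$ as a quadratic $f_{n,k}(x)$ in $x=d_{n,k}/d_{n-1,k}$ (using two recurrences that write $d_{n-1,k\pm1}$ in terms of $d_{n-1,k}$ and $d_{n,k}$), reads off the larger root $X(n,k)$ of $f_{n,k}$, and then proves by induction on $n$ that $d_{n,k}/d_{n-1,k}\ge X(n,k)$. The inductive step uses a recurrence in $n$ to write $d_{n+1,k}/d_{n,k}$ as a decreasing function of $d_{n-1,k}/d_{n,k}$, so the hypothesis feeds directly into an explicit inequality $-\frac{2(n-k+1)}{(n-2k+1)X(n,k)}+\frac{3n-5k+4}{n-2k+1}\ge X(n+1,k)$, which is checked (e.g.\ by \texttt{Resolve}). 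The point is that the lower bound must be \emph{exactly} the root of the relevant quadratic, not a universal constant; this is the missing idea in your outline.
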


Our proof of Lemma~\ref{lem-dnk} is inspired by the computer-assisted proof of Moll's log-concavity conjecture on the coefficient sequences of Jacobi polynomials due to Kauers and Paule~\cite{kauers2007computer}.

\subsection{Recurrence relations of \texorpdfstring{$d_{n,k}$}{d\_\{n,k\}}}

In this subsection, we present several recurrence relations for $d_{n,k}$, 
which serve as the key to proving Lemma~\ref{lem-dnk}. 
These relations were derived using the package \texttt{HolonomicFunctions} by Koutschan \cite{koutschan2009advanced} for Mathematica.

\begin{lemma}
For any positive integers $n$ and $k$ with $n \geq 2k+1$, we have
\begin{align}
d_{n+1,k}&=-\frac{2(n-k+1)d_{n-1,k}-(3n-5k+4)d_{n,k}}{n-2k+1} \label{lem-rec1},\\ 
d_{n-1,k-1}&=-\frac{2(n-k+1)d_{n-1,k}-(2n-3k+3)d_{n,k}}{n-2k+1}\label{lem-rec3},\\
d_{n-1,k+1}&=\frac{\big(4n^2+(4-13k)n+11k^2-5k\big)d_{n-1,k}-\big(2n^2-7nk+6k^2\big)d_{n,k}}{2k(n-k)}\label{lem-rec2}.
\end{align}
\end{lemma}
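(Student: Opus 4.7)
The plan is to derive all three recurrences from a semi-closed form for $d_{n,k}$ and then verify each identity by elementary Pascal-rule manipulations. Starting from the definition and applying the substitution $i\mapsto n-i+1$ together with the symmetry $\binom{n-k+1}{j}=\binom{n-k+1}{n-k+1-j}$, one rewrites
\begin{equation*}
 d_{n,k}=\sum_{j=k+1}^{n-k+1}(j-k)\binom{n-k+1}{j}.
\end{equation*}
Splitting $(j-k)\binom{n-k+1}{j}=j\binom{n-k+1}{j}-k\binom{n-k+1}{j}$ and using $j\binom{n-k+1}{j}=(n-k+1)\binom{n-k}{j-1}$ reduces the sum to partial row-sums of Pascal's triangle; after a brief manipulation this yields the compact form
\begin{equation*}
 d_{n,k}=(n-3k+1)\,g_{n,k}+k\,h_{n,k},\qquad g_{n,k}:=\sum_{j=0}^{n-2k}\binom{n-k}{j},\ \ h_{n,k}:=\binom{n-k}{k}.
\end{equation*}

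Pascal's rule then supplies shift identities such as $g_{n+1,k}=2g_{n,k}+\binom{n-k}{k-1}$, $g_{n,k}=2g_{n-1,k}+\binom{n-k-1}{k-1}$, $g_{n-1,k-1}=g_{n,k}+\binom{n-k}{k-1}$, an analogous expression for $g_{n-1,k+1}$, together with the elementary Pascal identities for $h_{n\pm 1,k}$ and $h_{n-1,k\pm 1}$. Combined with the bridge identities $\binom{n-k}{k-1}=\frac{k}{n-2k+1}\,h_{n,k}$ and $\binom{n-k-1}{k-1}=\frac{k}{n-k}\,h_{n,k}$, every shifted $d_{*,*}$ becomes an explicit $\mathbb{Q}(n,k)$-linear combination of $g_{n,k}$ and $h_{n,k}$. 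Substituting these expansions into each of the three candidate recurrences collapses it to a pair of polynomial identities in $n,k$ (namely, the vanishing of the coefficients of $g_{n,k}$ and $h_{n,k}$), which can be verified by direct expansion.

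The main obstacle is essentially algebraic bookkeeping, plus some care with the $k=0$ boundary case, where the ``small'' binomials $\binom{n-k}{k-1}$ and $\binom{n-k-1}{k-1}$ degenerate and the closed form $d_{n,0}=(n+1)2^{n}$ gives the recurrences directly. A faster but less transparent alternative, matching the authors' reference to \texttt{HolonomicFunctions}, is to apply Zeilberger's creative-telescoping algorithm to the summand $F(n,k,j)=(n-2k-j+1)\binom{n-k+1}{j}$: this produces explicit rational certificates $R(n,k,j)$ such that $R\cdot F$ telescopes in $j$, with each recurrence then following from a single polynomial identity verification.
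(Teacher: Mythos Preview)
Your argument is correct, and it takes a genuinely different route from the paper. The paper's proof is entirely computer-algebraic: it calls \texttt{Annihilator} on the defining sum for $d_{n,k}$ to obtain a Gr\"obner basis of the annihilating ideal in the Ore algebra generated by $S_n,S_k$, and then checks with \texttt{OreReduce} that each of the three claimed operators lies in that ideal. No human-readable derivation is given; the recurrences are certified rather than explained.

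Your approach, by contrast, is elementary and structural. The key step is the two-term closed form $d_{n,k}=(n-3k+1)g_{n,k}+k\,h_{n,k}$, which reduces the problem to tracking how the partial binomial row-sum $g_{n,k}$ and the single binomial $h_{n,k}$ behave under shifts in $n$ and $k$. Because $\{g_{n,k},h_{n,k}\}$ is closed (over $\mathbb{Q}(n,k)$) under all the required shifts via Pascal's rule and the bridge identities you list, every recurrence becomes a pair of rational-function identities, verifiable by hand. This buys transparency and independence from software, and it also explains \emph{why} two-term mixed recurrences exist: $d_{n,k}$ lives in a two-dimensional module over the shift operators. The paper's method is faster and scales to situations where no such short closed form is apparent, but yours gives more insight here. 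Your remark about the $k=0$ boundary is harmless but unnecessary, since the lemma assumes $k\ge 1$; the only genuine boundary to watch is $n\in\{2k+1,2k+2\}$ in \eqref{lem-rec2}, where $d_{n-1,k+1}=0$ and your closed form (with the empty-sum convention for $g$) handles this correctly.
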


\begin{proof}
The proof is analogous to the verification that $a_{n,k}$ satisfies 
the recurrence relation \eqref{c[n,k]rec}. 
We apply Koutschan's package \texttt{HolonomicFunctions} \cite{koutschan2009advanced} 
to the defining summation of $d_{n,k}$ and compute its annihilating ideal
by the following command.

\begin{mma}
\In |dnn|=|Annihilator|[|Sum|[(n-i+1)|Binomial|[n-k+1,i-2k], \{i,2k,n\}],\linebreak
\{S[k],S[n]\}] \\
\end{mma}

\begin{mma}
\Out \{2k(-1+k-n)S_{k} +\left(-6k^{2}+7k(1+n)-2(1+n)^{2}\right)S_{n}+ 11k^{2}+4(1+n)(2+n)-k(18+13n), 
(-2+2k-n)S_{n}^{2} +(7-5k+3n)S_{n} +2(-2+k-n)\}\\
\end{mma}

We verify that each of the claimed relations 
\eqref{lem-rec1}, \eqref{lem-rec3}, and \eqref{lem-rec2} reduces to zero modulo this annihilating ideal. 
This is done by the \texttt{OreReduce} command as follows.

\begin{mma}
\In |OreReduce|[(n-2k+2)S[n]^{2}-(3n-5k+7)S[n]+2(n-k+2),|dnn|]\\
\end{mma}

\begin{mma}
\Out 0 \\
\end{mma}

\begin{mma}
\In |OreReduce|[(2n-3k+2)S[k]S[n]-2(n-k+1)S[k]-(n-2k),|dnn|]\\
\end{mma}

\begin{mma}
\Out 0 \\
\end{mma}

\begin{mma}
\In |OreReduce|[(2n^{2}+(4-7k)n+6k^{2}-7k+2)S[n]+2k(n-k+1)S[k]-4n^{2}+(13k-12)n-11k^{2}+18k-8,|dnn|]\\
\end{mma}

\begin{mma}
\Out 0 \\
\end{mma}

Since all three Ore reductions yield zero, 
the desired recurrences \eqref{lem-rec1}--\eqref{lem-rec2} are established. 
This completes the proof.
\end{proof}

\subsection{Lower bound of \texorpdfstring{$\frac{d_{n,k}}{d_{n-1,k}}$}{d\_\{n,k\}/d\_\{n-1,k\}}}

The aim of this subsection is to establish a lower bound for the ratio $\frac{d_{n,k}}{d_{n-1,k}}$ for all positive integers $n$ and $k$ satisfying $n \geq 2k+1$. To this end, we introduce the following notation.
Let 
\begin{align}
X(n,k):=&\frac{1}{2}\left(3+\frac{k-1}{2n-3k+3}+\frac{k+3}{n-2k}-\frac{k+2}{2n-3k}\right) \notag \\
&+\frac{\sqrt{y(n,k)}}{2(2n-3k+3)(2n-3k)(n-2k)}, \label{eq:x_n,k}
\end{align}
where 
\begin{align*}
y(n,k):=&\big(4n^2+(4-12k)n+9k^2-5k\big)\big(4n^4+(28-36k)n^3+(60-149k+117k^2)n^2\\
&+(36-174k+276k^2-162k^3)n+81k^4-171k^3+135k^2-45k\big).
\end{align*}
It is worth noting that both $y(n,k) > 0$ and $X(n,k) > 0$ hold for all positive integers $n$ and $k$ satisfying $n \geq 2k + 1$. 
The positivity of $y(n,k)$ can be verified either through a direct but lengthy symbolic computation or with the aid of the Mathematica command \texttt{Resolve} using the following input.
{\renewcommand\baselinestretch{1.5}
\begin{mma}
\In y[n\_,k\_]:=(4n^2+(4-12k)n+9k^2-5k)(4n^4+(28-36k)n^3+(60-149k+117k^2)n^2+(36-174k+276k^2-162k^3)n+81k^4-171k^3+135k^2-45k); \\
\In y[n,k]>0;\\
\In |Resolve|[|ForAll|[\{n,k\},n \geq 1 \&\& k\geq 1 \&\&n \geq 2k+1,\%]]\\
\end{mma}
}
\begin{mma}
\Out |True|\\
\end{mma}

We now verify that $X(n,k) > 0$. Under the assumption $n \geq 2k + 1$, the denominator
$(2n-3k+3)(2n - 3k)(n - 2k)$ is strictly positive.
Since the numerator of the square root is $y(n,k) > 0$, the entire radical term is positive. Moreover, each of the terms
$\frac{k-1}{2n-3k+3}$, $\frac{k+3}{n-2k}$, and $3 - \frac{k+2}{2n-3k}$ is positive. 
It follows that $X(n,k)$ is the sum of positive terms, and hence $X(n,k) > 0$.

\begin{lemma}\label{lem-log1}
For any positive integers $n$ and $k$ with $n \geq 2k+1 $, we have
$$\frac{d_{n,k}}{d_{n-1,k}} \geq X(n,k).$$
\end{lemma}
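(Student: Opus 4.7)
My plan is to prove the lemma by induction on $n$ for fixed $k \geq 1$, using the first-order iteration induced by \eqref{lem-rec1}. Setting $r_{n,k} := d_{n,k}/d_{n-1,k}$ and dividing \eqref{lem-rec1} by $d_{n,k}$, we obtain
\[
r_{n+1,k} = \frac{3n-5k+4}{n-2k+1} \;-\; \frac{2(n-k+1)}{(n-2k+1)\, r_{n,k}},
\]
which is strictly increasing in $r_{n,k}$ on $(0,\infty)$ (its derivative equals $2(n-k+1)/[(n-2k+1) r_{n,k}^2]>0$). Hence it suffices to establish a base case at the smallest admissible index $n=2k+1$ and an inductive step: if $r_{n,k}\geq X(n,k)$, monotonicity reduces $r_{n+1,k}\geq X(n+1,k)$ to the purely algebraic inequality
\[
\frac{3n-5k+4}{n-2k+1} \;-\; \frac{2(n-k+1)}{(n-2k+1)\, X(n,k)} \;\geq\; X(n+1,k).
\]

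For the base case, a direct computation from \eqref{eq:d_n,k} yields $d_{2k,k}=1$ and $d_{2k+1,k}=2\binom{k+2}{0}+\binom{k+2}{1}=k+4$, so $r_{2k+1,k}=k+4$. Specializing \eqref{eq:x_n,k} at $n=2k+1$ uses $2n-3k+3=k+5$, $n-2k=1$, $2n-3k=k+2$, and factors $y(2k+1,k)=(k^2+7k+8)\,g(k)$ for an explicit polynomial $g(k)$; the comparison with $k+4$ becomes a single-variable inequality in $k$ that is routine (for $k=1$ it reads $5\geq 3+\tfrac{4\sqrt 2}{3}\approx 4.886$).

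For the inductive step, rewrite the target as
\[
\bigl[(3n-5k+4)-(n-2k+1)\,X(n+1,k)\bigr]\,X(n,k) \;\geq\; 2(n-k+1).
\]
Writing $X(n,k)=U+V\sqrt{y(n,k)}$ and $X(n+1,k)=U'+V'\sqrt{y(n+1,k)}$ with $U,V,U',V'$ rational in $n,k$, the left-hand side becomes a $\mathbb{Q}(n,k)$-combination of $1$, $\sqrt{y(n,k)}$, $\sqrt{y(n+1,k)}$, and $\sqrt{y(n,k)\,y(n+1,k)}$. Isolating one radical at a time and squaring, while tracking positivity of the pieces on the region $n\geq 2k+1$, $k\geq 1$, eliminates all radicals and reduces the claim to a polynomial inequality $P(n,k)\geq 0$ on that region. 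This inequality is then certified by symbolic computation; the natural tools, mirroring the style employed earlier in the paper, are Mathematica's \texttt{Resolve} over a cylindrical algebraic decomposition, or the \texttt{HolonomicFunctions} package's \texttt{OreReduce} applied to an annihilating ideal of the relevant family.

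The main obstacle is exactly this final polynomial verification: two nested square roots whose radicands are degree-$6$ polynomials in $n$ produce, after elimination, an unwieldy certificate $P(n,k)$ of substantial total degree. A conceptually clean proof should come from identifying the structural origin of $X(n,k)$; the most plausible candidate is that $X(n,k)$ is the larger root of the quadratic in $r_{n,k}$ obtained by substituting the neighbor ratios $d_{n-1,k\pm 1}/d_{n-1,k}$ from \eqref{lem-rec3} and \eqref{lem-rec2} into the desired log-concavity inequality, with $y(n,k)$ arising as the corresponding discriminant up to square factors. Uncovering this structure both explains the particular shape of \eqref{eq:x_n,k}, with denominators $2n-3k+3$, $2n-3k$, $n-2k$ matching exactly the factors appearing in \eqref{lem-rec3} and \eqref{lem-rec2}, and should reduce the inductive polynomial identity to a discriminant/resultant identity amenable to a short human-readable verification rather than brute-force elimination.
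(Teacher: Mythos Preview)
Your proposal is correct and follows essentially the same approach as the paper: induction on $n$ via the recurrence \eqref{lem-rec1}, with the base case at $n=2k+1$ handled by an explicit single-variable comparison and the inductive inequality $\frac{3n-5k+4}{n-2k+1} - \frac{2(n-k+1)}{(n-2k+1)X(n,k)} \geq X(n+1,k)$ verified by Mathematica's \texttt{Resolve} (the paper skips your manual radical-elimination step and feeds the inequality directly). Your closing conjecture is also exactly right: in the paper's proof of Lemma~\ref{lem-dnk}, $X(n,k)$ is the larger root $z_2(n,k)$ of the quadratic $f_{n,k}$ obtained by substituting \eqref{lem-rec3} and \eqref{lem-rec2} into the log-concavity inequality, and $y(n,k)$ is its discriminant.
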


\begin{proof}
Fix $k \geq 1$. 
We proceed by induction on $n \geq 2k+1$.
For the base case $n=2k+1$, we verify that
$$\frac{d_{2k+1,k}}{d_{2k,k}} \geq X(2k+1,k).$$
From the explicit expressions of $d_{n,k}$ in~\eqref{eq:d_n,k} and $X(n,k)$ in~\eqref{eq:x_n,k}, we compute
\begin{align*}
&\frac{d_{2k+1,k}}{d_{2k,k}}-X(2k+1,k)\\
=&\frac{k^3+9k^2+30k+32-\sqrt{(k+2)(k^2+7k+8)(k^3+9k^2+22k+64)}}{2(k+2)(k+5)}.
\end{align*}
Observe that
\begin{align*}
&(k^3+9k^2+30k+32)^{2}-(k+2)(k^2+7k+8)(k^3+9k^2+22k+64) \\
=& 16k(k+1)(k+2)(k+5) > 0,
\end{align*}
for all positive integers $k$, which establishes the base case.

Now suppose the inequality holds for some $n \geq 2k+1$, i.e.,
\begin{align*}
\frac{d_{n,k}}{d_{n-1,k}} \geq X(n,k).
\end{align*}
We aim to prove the inequality also holds for $n+1$. Dividing both sides of recurrence~\eqref{lem-rec1} by $d_{n,k}$, we obtain
\begin{align*}
\frac{d_{n+1,k}}{d_{n,k}}
=-\frac{2(n-k+1)}{(n-2k+1)} \cdot \frac{d_{n-1,k}}{d_{n,k}}
+\frac{3n-5k+4}{n-2k+1}. 
\end{align*}
Since the coefficient $-\frac{2(n - k + 1)}{n - 2k + 1} < 0$ and $X(n,k) > 0$, applying the inductive hypothesis yields
\begin{align*}
\frac{d_{n+1,k}}{d_{n,k}}
\geq 
-\frac{2(n-k+1)}{(n-2k+1)}\cdot \frac{1}{X(n,k)}+\frac{3n-5k+4}{n-2k+1}.
\end{align*}

To complete the inductive step, it suffices to verify that the right-hand side above is greater than or equal to $X(n+1,k)$.
This inequality can be verified automatically using the Mathematica \texttt{Resolve} command via the following input.
{\renewcommand\baselinestretch{2.5}
\begin{mma}
\In X[n\_,k\_]:=\frac{1}{2}\left(3+\frac{k-1}{2n-3k+3}+\frac{k+3}{n-2k}-\frac{k+2}{2n-3k}\right)
\linebreak
+\frac{\sqrt{y(n,k)}}{2(2n-3k+3)(2n-3k)(n-2k)};\\

\In -\frac{2(n-k+1)}{(n-2k+1)X[n,k]}+\frac{3n-5k+4}{n-2k+1} \geq X[n+1,k];\\

\In |Resolve|[|ForAll|[\{n,k\},n \geq 1 \&\& k\geq 1 \&\&n \geq 2k+1,\%]]\\
\end{mma}
}
\begin{mma}
\Out |True|\\
\end{mma} 
This completes the proof.
\end{proof}

\subsection{Proof of Theorem \ref{thm-log}}

In this subsection, we first prove Lemma~\ref{lem-dnk}, and then proceed to establish Theorem~\ref{thm-log}.

\begin{proof}[Proof of Lemma \ref{lem-dnk}.]

For notational simplicity and clarity, we shall prove
for any positive integers $n$ and $k$ with 
$n\geq 2k+3$,
\begin{align}\label{eq:lemma-4.1}
d^{2}_{n-1,k} - d_{n-1,k+1}d_{n-1,k-1} \geq 0.
\end{align}
Using the recurrence relations~\eqref{lem-rec3} and~\eqref{lem-rec2}, we obtain
\begin{align*}
&2k(n-k)(n-2k+1)\left(d^{2}_{n-1,k} - d_{n-1,k+1}d_{n-1,k-1}\right)
=d^{2}_{n-1,k}f_{n,k}\left(\frac{d_{n,k}}{d_{n-1,k}}\right),
\end{align*}
where
\begin{align*}
f_{n,k}(x):=&(2n-3k+3)(2n-3k)(n-2k)x^2\\
&+\left(45k^3 -(87n+60)k^{2}+(56n^2+75n+15)k-12n(n+1)^2 \right)x\\
&-2\left(9k^3-3(7n+5)k^{2}+(16n^2+21n+5)k-4n(n+1)^2 \right).
\end{align*}
It can be verified that the discriminant of $f_{n,k}(x)$ is $y(n,k)$. As shown earlier, $y(n,k) > 0$, so $f_{n,k}$ has two distinct real roots, which are
\begin{align*}
z_{1}(n,k):=&\frac{1}{2}\left(3+\frac{k-1}{2n-3k+3}+\frac{k+3}{n-2k}-\frac{k+2}{2n-3k}\right)\\
&-\frac{\sqrt{y(n,k)}}{2(2n-3k+3)(2n-3k)(n-2k)}
\end{align*}
and
\begin{align*}
z_{2}(n,k):=&\frac{1}{2}\left(3+\frac{k-1}{2n-3k+3}+\frac{k+3}{n-2k}-\frac{k+2}{2n-3k}\right)\\
&+\frac{\sqrt{y(n,k)}}{2(2n-3k+3)(2n-3k)(n-2k)}.
\end{align*}
Since $2(2n-3k+3)(2n-3k)(n-2k) > 0$,
we have $z_1(n,k) < z_2(n,k)$ and the leading coefficient of $f_{n,k}(x)$ is positive. 
Moreover, one observes that $z_2(n,k)$ coincides with the previously defined $X(n,k)$. The desired inequality \eqref{eq:lemma-4.1} then follows from Lemma~\ref{lem-log1} and the non-negativity of the factor $2k(n-k)(n-2k+1)$. This completes the proof.
\end{proof}

Now, we proceed to prove Theorem \ref{thm-log}.

\begin{proof}[Proof of Theorem \ref{thm-log}.]
By Lemma~\ref{lem-dnk}, we have $d^{2}_{n,k} \geq d_{n,k+1}d_{n,k-1}$. It is also well known that the sequence of binomial coefficients $\binom{n}{k}$ is log-concave, i.e., 
$$\dbinom{n}{k}^2 \geq \dbinom{n}{k+1} \dbinom{n}{k-1}.$$
This implies the analogous inequality for $b_{n,k}$, namely, $b^{2}_{n,k} \geq b_{n,k+1}b_{n,k-1}$. 
Since $c_{n,k} = b_{n,k} d_{n,k}$, combining the two inequalities and using the fact that the product of two log-concave sequences is again log-concave, we obtain
 $c^{2}_{n,k} \geq c_{n,k+1}c_{n,k-1}$
as desired.
\end{proof}

Based on Theorem~\ref{thm-log}, we obtain the following corollary.

\begin{corollary}
For any positive integer $n$, the inverse Kazhdan--Lusztig polynomial $Q_{K_{2,n}}(t)$ of the graphic
matroid of the complete bipartite graph $K_{2,n}$ is log-concave.
\end{corollary}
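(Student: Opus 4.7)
The plan is to deduce the log-concavity of $Q_{K_{2,n}}(t)$ from Theorem~\ref{thm-log} via the relation $Q_{K_{2,n}}(t) = Q_{T_n}(t) + (n-1)t - 1$ recorded in \eqref{connec-tn-k2n}. Writing $Q_{T_n}(t) = \sum_{k\ge 0} c_{n,k}\, t^{k}$ and $Q_{K_{2,n}}(t) = \sum_{k\ge 0} e_{n,k}\, t^{k}$, this identity yields $e_{n,0} = c_{n,0} - 1$, $e_{n,1} = c_{n,1} + n - 1$, and $e_{n,k} = c_{n,k}$ for $k \ge 2$, so only the two lowest coefficients are altered. I would then verify $e_{n,k}^{2} \ge e_{n,k-1}\, e_{n,k+1}$ separately in three regimes.

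For $k \ge 3$ the inequality is identical to $c_{n,k}^{2} \ge c_{n,k-1}\, c_{n,k+1}$ and hence follows directly from Theorem~\ref{thm-log}. For $k = 1$, combining the nonnegativity of the $c_{n,k}$ (Braden--Huh--Matherne--Proudfoot--Wang \cite{braden2020singular}) with Theorem~\ref{thm-log} yields
$$(c_{n,1} + n - 1)^{2} \ge c_{n,1}^{2} \ge c_{n,0}\, c_{n,2} \ge (c_{n,0} - 1)\, c_{n,2},$$
which is exactly $e_{n,1}^{2} \ge e_{n,0}\, e_{n,2}$. The remaining case $k = 2$ reduces to the estimate
$$c_{n,2}^{2} \ge (c_{n,1} + n - 1)\, c_{n,3},$$
and for $n \le 5$ one has $c_{n,3} = 0$, so the inequality is trivial.

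The hard part will be the case $k = 2$ with $n \ge 6$, which is a genuine strengthening of Theorem~\ref{thm-log} by an additive term of order $n\, c_{n,3}$. Here I would upgrade Lemma~\ref{lem-log1} into a quantitative lower bound for the gap $c_{n,2}^{2} - c_{n,1}\, c_{n,3}$. Concretely, using the factorization $c_{n,k} = b_{n,k}\, d_{n,k}$ and the quadratic $f_{n,2}(x)$ appearing in the proof of Lemma~\ref{lem-dnk}, together with the bound $d_{n,2}/d_{n-1,2} \ge X(n,2)$, one can express $c_{n,2}^{2} - c_{n,1}\, c_{n,3}$ as a strictly positive rational function of $n$ and compare it against $(n-1)\, c_{n,3}$. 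The resulting polynomial inequality in $n$ can be verified by the same Mathematica \texttt{Resolve} technique used in the proofs of Lemmas~\ref{lem-dnk} and~\ref{lem-log1}, with a finite sweep of small values absorbing any transitional cases. Combining the three regimes yields log-concavity of $\{e_{n,k}\}_k$, completing the proof.
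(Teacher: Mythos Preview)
Your reduction via \eqref{connec-tn-k2n} to the two boundary inequalities at $k=1$ and $k=2$ is exactly the paper's strategy, and your handling of $k\ge 3$ and $k=1$ matches the paper's.

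The gap is in your plan for $k=2$. Two concrete issues. First, $c_{n,2}^2 - c_{n,1}c_{n,3}$ is \emph{not} a rational function of $n$: the coefficients $c_{n,j}$ involve $2^n$ (e.g.\ $c_{n,1}=2^{n-1}(n-2)+1$), so the gap is a combination of $4^n$, $2^n$, and polynomial terms, not something the $f_{n,2}$ machinery will produce as a closed rational expression. Second, and more seriously, the bound $d_{n,2}/d_{n-1,2}\ge X(n,2)$ of Lemma~\ref{lem-log1} is precisely the larger root of $f_{n,2}$; feeding it back into the identity in the proof of Lemma~\ref{lem-dnk} gives only $d_{n-1,2}^2 - d_{n-1,1}d_{n-1,3}\ge 0$, with no quantitative margin whatsoever. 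To extract a strictly positive lower bound large enough to absorb the extra $(n-1)c_{n,3}$ you would have to go back and sharpen the induction in Lemma~\ref{lem-log1} itself, which is substantially more than you indicate.

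The paper avoids this detour entirely. From \eqref{cn,k} one computes the closed forms
\[
c_{n,1}=2^{n-1}(n-2)+1,\qquad c_{n,2}=\tfrac{n}{8}\bigl(2^n(n-5)+4(n+1)\bigr),\qquad c_{n,3}=\tfrac{n(n-1)}{48}\bigl(2^n(n-8)+4n(n-1)+16\bigr),
\]
substitutes them directly into $c_{n,2}^2\ge(c_{n,1}+n-1)c_{n,3}$, and reduces to a single explicit inequality in $n$ and $2^n$ that \texttt{Resolve} certifies. This is both simpler and complete; your proposed route through $f_{n,2}$ and $X(n,2)$ does not close as stated.
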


\begin{proof}
By combining \eqref{connec-tn-k2n} with Theorem~\ref{thm-log}, it suffices to prove that for any positive integer $n$,
\begin{align}\label{last-eq}
(c_{n,1}+n-1)^2 \geq (c_{n,0}-1)c_{n,2}
\qquad\text{and}\qquad
c_{n,2}^2 \geq (c_{n,1}+n-1)c_{n,3}.
\end{align}
The first inequality follows directly from the known relation $c^2_{n,1} \geq c_{n,0} c_{n,2}$.
For the second inequality, we let  $k=1,2,3$ in~\eqref{cn,k} and  obtain
\begin{align*}
 c_{n,1} &= 2^{n-1}(n-2)+1, \\
 c_{n,2} &= \frac{n\bigl(2^n (n-5)+4(n+1)\bigr)}{8},\\
 c_{n,3}&= \frac{n(n-1)\bigl(2^n (n-8)+4n(n-1)+16\bigr)}{48}.
\end{align*}
Substituting these expressions into the second inequality of \eqref{last-eq} and simplifying yields
\begin{align*}
 c_{n,2}^2 - (c_{n,1}+n-1)c_{n,3}
 =& \frac{3 n^2 \bigl(2^n (n-5)+4 (n+1)\bigr)^2}{192} \\
 &- \frac{4n(n-1)\bigl(2^{n-1} (n-2)+n\bigr)
 \bigl(2^n (n-8)+4n(n-1)+16\bigr)}{192}.
\end{align*}
Thus, it suffices to show that for all positive integers $n$,
\begin{align*}
 3 n \bigl(2^n (n-5)+4 (n+1)\bigr)^2
 - 4 (n-1) \bigl(2^{n-1} (n-2)+n\bigr)
 \bigl(2^n (n-8)+4n(n-1)+16\bigr) \ge 0.
\end{align*} 
This nonnegativity can be  verified rigorously in Mathematica by quantifier elimination via \texttt{Resolve}.

\begin{mma}
\In 3 n (2^n (n-5)+4 (n+1))^2-4 (n-1) (2^{n-1} (n-2)+n)(2^n (n-8)+4 (n-1) n +16) \geq 0; \\
\end{mma}

\begin{mma}
\In |Resolve|[|ForAll|[\{n\},n \geq 1 \&\& n \in |Integers| ,\%]]\\
\end{mma}

\begin{mma}
\Out |True|\\
\end{mma}

This completes the proof.
\end{proof}

\vspace{4mm}

\noindent{\bf Acknowledgements.} 
The first author is supported by the National Science Foundation of China (No.11801447) and the National Science Foundation for Post-doctoral Scientists of China (No.2020M683544).
The third author is supported by the National Science Foundation of China (No.11901431).


\begin{thebibliography}{18}

\bibitem{ardila2023valuations}
F.~Ardila and M.~Sanchez.
Valuations and the Hopf monoid of generalized permutahedra. 
{\em International Mathematics Research Notices}, 2023, 2023(5): 4149-4224.

\bibitem{braden2020singular}
T.~Braden, J.~Huh, J.~Matherne, N.~Proudfoot, and B.~Wang.
Singular Hodge theory for combinatorial geometries.
arXiv:2010.06088, 2020.



\bibitem{braden2025deletion}
T.~Braden, L.~Ferroni, J.~Matherne, and N.~Nepal.
Inverse Kazhdan--Lusztig polynomials of matroids under deletion.
arXiv:2510.01086, 2025.

\bibitem{elias2016kazhdan}
B.~Elias, N.~Proudfoot, and M.~Wakefield.
The Kazhdan--Lusztig polynomial of a matroid.
{\em Advances in Mathematics}, 2016, 299: 36-70.

\bibitem{ferroni2024hilbert}
L.~Ferroni, J.~Matherne, M.~Stevens, and L.~Vecchi.
Hilbert-Poincar\'e series of matroid Chow rings and intersection cohomology. 
{\em Advances in Mathematics}, 2024, 449: 109733.

\bibitem{ferroni2024valuative}
L. Ferroni and B. Schr\"{o}ter. Valuative invariants for large classes of matroids. \emph{Journal of the London Mathematical Society}. 2024, 110(3): e12984.

\bibitem{frame1954CJM}
J.~Frame, G.~Robinson and R.~Thrall. The hook graphs of the symmetric group.
\emph{Canadian Journal of Mathematics}, 1954, 6: 316-324.



\bibitem{gao2021inverse}
A.~Gao and M.~Xie.
The inverse Kazhdan--Lusztig polynomial of a matroid.
{\em Journal of Combinatorial Theory, Series B}, 2021, 151: 375-392.

\bibitem{gao2022equivariant}
A.~Gao, M.~Xie, and A.~Yang. 
The equivariant inverse Kazhdan--Lusztig polynomials of uniform matroids.
{\em SIAM Journal on Discrete Mathematics}, 2022, 36(4): 2553-2569.

\bibitem{gedeon2017kazhdan}
K.~Gedeon.
Kazhdan--Lusztig polynomials of thagomizer matroids.
{\em The Electronic Journal of Combinatorics}, 2017, \#P3.12.

\bibitem{gedeon2017equivariant}
K.~Gedeon, N.~Proudfoot, and B.~Young.
The equivariant Kazhdan--Lusztig polynomial of a matroid.
{\em Journal of Combinatorial Theory, Series A}, 2017, 150: 267-294.

\bibitem{haglund2008q}
J.~Haglund.
The $q$,$t$-Catalan numbers and the space of diagonal harmonics, with an appendix on the combinatorics of Macdonald polynomials,
University Lecture Series, vol.~41, American Mathematical Society, Providence, RI, 2008.

\bibitem{karn2023equivariant}
T. Karn, G.~Nasr, N.~Proudfoot and L.~Vecchi.
Equivariant Kazhdan--Lusztig theory of paving matroids.
{\em Algebraic Combinatorics}, 2023, 6(3): 677-688.

\bibitem{kauers2007computer}
M.~Kauers and P.~Paule. 
A computer proof of Moll's log-concavity conjecture. 
{\em Proceedings of the American Mathematical Society}, 2007, 135(12): 3847-3856.

\bibitem{koutschan2009advanced}
C.~Koutschan.
Advanced applications of the holonomic systems approach, Doctoral Thesis, Research Institute for Symbolic Computation(RISC), Johannes Kepler University, Linz, Austria, 2009.




\bibitem{proudfoot2021equivariant}
N.~Proudfoot. Equivariant incidence algebras and equivariant Kazhdan-Lusztig--Stanley theory.
{\em Algebraic Combinatorics}, 2021, 4(4): 675-681.


\bibitem {Stanley1999}
R. Stanley. Enumerative combinatorics. Vol. 2, with a foreword by Gian-Carlo Rota
and appendix 1 by Sergey Fomin, Cambridge Studies in Advanced Mathematics, vol.
62, Cambridge University Press, Cambridge, 1999.


\bibitem{wu2023log}
S.~Wu and P.~Zhang.
The log-concavity of Kazhdan--Lusztig polynomials of thagomizer matroids.
{\em Discrete Mathematics}, 2023, 346(7): 113381.

\bibitem{xie2019equivariant}
M.~Xie and P.~Zhang. 
Equivariant Kazhdan--Lusztig polynomials of thagomizer matroids.
{\em Proceedings of the American Mathematical Society}, 2019, 147(11): 4687-4695.

\bibitem{xie2025log}
M.~Xie and P.~Zhang.
Log-concavity of inverse Kazhdan--Lusztig polynomials of paving matroids. 
arXiv:2504.17567, 2025.


\end{thebibliography}
\end{document}